\newcommand{\autorefcheckize}[1]{%
  \expandafter\let\csname @@\string#1\endcsname#1%
  \expandafter\DeclareRobustCommand\csname relax\string#1\endcsname[1]{%
    \csname @@\string#1\endcsname{##1}\wrtusdrf{##1}}%
  \expandafter\let\expandafter#1\csname relax\string#1\endcsname
}
\declaretheorem[numberwithin=section]{theorem}
\declaretheorem[sibling=theorem, name=Lemma]{lem}
\declaretheorem[sibling=theorem, name=Corollary]{cor}
\declaretheorem[sibling=theorem, name=Remark]{rem}
\numberwithin{equation}{section}
\newtheorem{lemma}[theorem]{Lemma}
\newcommand{\norm}[1]{\left\lVert#1\right\rVert}
\newcommand{\abs}[1]{\left\lvert#1\right\rvert}
\newcommand{\set}[1]{\left\{#1\right\}}
\newcommand{\hin}[2]{\left\langle#1,#2\right\rangle}
\newcommand*{\To}{\longrightarrow}
\newcommand*{\Rmn}[1]{\uppercase\expandafter{\romannueral#1}}
\newcommand*{\dif}{\mathop{}\!\mathrm{d}}
\journal{XXX}
\begin{document}

\begin{frontmatter}

\title{Existence results for a generalized mean field equation  on a closed Riemann surface\tnoteref{swy}}

\author[whu1,whu2]{Linlin Sun\corref{sll}}
\address[whu1]{School of Mathematics and Statistics, Wuhan University, Wuhan 430072, China}
\address[whu2]{Hubei Key Laboratory of Computational Science, Wuhan University, Wuhan, 430072, China}
\ead{sunll@whu.edu.cn}

\author[ruc]{Yamin Wang}
\ead{2017100918@ruc.edu.cn}

\author[ruc]{Yunyan Yang}
\address[ruc]{Department of Mathematics, Renmin University of China, Beijing 100872, China}
\ead{yunyanyang@ruc.edu.cn}

\cortext[sll]{Corresponding author.}
\tnotetext[swy]{This work is partially supported by the National Natural Science Foundation of China (Grant Nos. 11971358, 11801420, 11721101), and by the National Key Research and Development Project SQ2020YFA070080.}

\begin{abstract}
Let $\Sigma$ be a closed Riemann surface, $h$ a positive smooth function on $\Sigma$, $\rho$ and $\alpha$ real numbers. In this paper, we study a generalized mean field equation
\begin{align*}
    -\Delta u=\rho\left(\dfrac{he^u}{\int_\Sigma he^u}-\dfrac{1}{\mathrm{Area}\left(\Sigma\right)}\right)+\alpha\left(u-\fint_{\Sigma}u\right),
\end{align*}
where $\Delta$ denotes the Laplace-Beltrami operator. We first derive a uniform bound for solutions when $\rho\in (8k\pi, 8(k+1)\pi)$ for some non-negative integer number $k\in \mathbb{N}$  and $\alpha\notin\mathrm{Spec}\left(-\Delta\right)\setminus\set{0}$. Then we obtain existence results for $\alpha<\lambda_1\left(\Sigma\right)$
by using the Leray-Schauder degree theory and the minimax method, where $\lambda_1\left(\Sigma\right)$ is the first positive eigenvalue for
$-\Delta$.
\end{abstract}

\begin{keyword}
generalized mean field equation \sep blow-up analysis \sep topological degree \sep minimax method

\MSC[2020]  53C21\sep 58J05\sep 35J20\sep 35J61
\end{keyword}

\end{frontmatter}


\section{Introduction}
Let $\Sigma$ be a closed Riemann surface with area one and $\Delta$ be the  Laplace-Beltrami operator.
The mean field equation is stated as follows
\begin{equation}\label{eq:mf}
-\Delta u=\rho\left(\dfrac{he^u}{\int_\Sigma he^u }-1\right),
\end{equation}
where $\rho$ is a real number and $h$ is a smooth function on $\Sigma$.  It comes from the prescribed Gaussian curvature problem  \cite{Ber71riemannian,KazWar74curvature,ChaYan87prescribing,ChaYan88conformal,ChaLiu93nirenberg,CheDin87scalar,Han90prescribing},
and also appears in various context such as the abelian Chern-Simons-Higgs models  \cite{RicTar00vortices,Nol03nontopological,NolTar00vortex,NolTar99double,JacWei90self-dual,HonKimPac90multivortex,CafYan95vortex,SprYan95topological,Tar96multiple}.

The existence of solutions of the mean field equation has been widely studied in recent decades.  Recall the strong Trudinger-Moser inequality \cite[Theorem 1.7]{Fon93sharp}
\begin{align}\label{eq:StrongTM}
    \sup_{u\in H^1\left(\Sigma\right), \int_{\Sigma}\abs{\nabla u}^2\leq 1, \int_{\Sigma}u=0}\int_{\Sigma}\exp\left(4\pi u^2\right)<\infty,
\end{align}
which implies the Trudinger-Moser inequality
\begin{align}\label{eq:TM}
    \ln\fint_{\Sigma}e^{u}\leq\dfrac{1}{16\pi}\int_{\Sigma}\abs{\nabla u}^2+\fint_{\Sigma}u+c,
\end{align}
where $c$ is a uniform constant depends only on the geometry of $\Sigma$. Consequently, for $\rho<8\pi$, the Trudinger-Moser inequality \eqref{eq:TM} gives a minimizer to the action functional
\begin{align*}
   H^1\left(\Sigma\right)\ni u\mapsto\dfrac{1}{2}\int_{\Sigma}\abs{\nabla u}^2+\rho\left(\fint_{\Sigma}u-\ln\abs{\int_{\Sigma}he^{u}}\right).
\end{align*}
Many partial existence results have been obtained for noncritical cases, see for examples Struwe and Tarantello \cite{StruTar98multivortex}, Ding, Jost, Li and Wang \cite{DinJosLiWan99existence}, Chen and Lin \cite{CheLin03topological}, Djadli \cite{Dja08existence} and the references therein.
For the first critical case $\rho=8\pi$ and $h>0$, several sufficient conditions for the existence to the Nirenberg problem (i.e., $\rho=8\pi$ and the genus is zero) were given by Moser \cite{Moser73nonlinear}, Aubin \cite{Aub79meilleures},  Chang and Yang \cite{ChaYan87prescribing,ChaYan88conformal}, Ji \cite{Ji04positive} and others; a sufficient condition for the minimizing solution to the mean field equation on a closed Riemann surface $\Sigma$ with positive genus was given by Ding, Jost, Li and Wang \cite{DinJosLiWan97differential};  it was also independently proved by  Nolasco and Tarantello \cite{NolTar98sharp} when $\Sigma$ is a flat torus. Chen and Lin \cite{CheLin02sharp} obtained an existence result for general critical cases (i.e., $\rho=8k\pi$) which generalized Ding, Jost, Li and Wang's result \cite{DinJosLiWan97differential}.  For sign-changing potential $h$, we refer the reader to \cite{MarLop16existence,MarLopRui18compactness,TerMarIan18prescribed,Han90prescribing,CheLi08priori,CheDin87scalar} and the references therein. For the uniqueness to \eqref{eq:mf} we refer the reader to Gui and Moradifam \cite{GuiMor18sphere}, Shi, Sun, Tian and Wei  \cite{ShiSunTiaWei19uniqueness} and others. We refer to \cite{Tar10analytical} for a nice survey on the mean field equation.

Among various improvements of (\ref{eq:StrongTM}), it was proved by Yang \cite{Yan15extremal} that for all $\alpha<\lambda_1(\Sigma)$,
the first positive eigenvalue of the negative Laplacian $-\Delta$, there holds
\begin{align}\label{eq:improvedTM}\sup_{u\in H^1(\Sigma),\int_\Sigma|\nabla u|^2-\alpha\int_\Sigma u^2\leq 1,\int_\Sigma u=0}
\int_\Sigma e^{4\pi u^2}<\infty.\end{align}
This leads to an analog of (\ref{eq:TM}), namely for any $\alpha<\lambda_1(\Sigma)$, there exists some constant $c$ depending only on the geometry of $\Sigma$ such that
for all $u\in H^1(\Sigma)$ with $\int_\Sigma u=0$,
\begin{align}\label{eq:TM-1}
    \ln\int_{\Sigma}e^{u}\leq\dfrac{1}{16\pi}\int_{\Sigma}\left(\abs{\nabla u}^2-\alpha u^2\right)+c.
\end{align}
In view of (\ref{eq:improvedTM}) and (\ref{eq:TM-1}), it is natural to consider the following generalized mean field equation
\begin{equation}\label{eq:gmf}
-\Delta u=\rho\left(\dfrac{he^u}{\int_\Sigma he^u }-1\right)+\alpha\left(u-\fint_{\Sigma}u\right),
\end{equation}
where $h$ is a smooth positive function on $\Sigma$ and $\rho, \alpha\in\mathbb{R}$. The related functional would be written as
\begin{align}\label{eq:functional}
    J_{\rho,\alpha}(u)=J_{\rho,\alpha, h}(u)=\dfrac{1}{2\rho}\int_{\Sigma}\left(\abs{\nabla u}^2-\alpha\left(u-\fint_{\Sigma}u\right)^2\right)+\fint_{\Sigma}u-\ln{\int_{\Sigma}he^{u}},\quad u\in H^1\left(\Sigma\right).
\end{align}
As an immediately consequence of \eqref{eq:TM-1}, there exists a solution to \eqref{eq:gmf} when $\rho<8\pi$ and $\alpha<\lambda_1\left(\Sigma\right)$. In \cite{YanZhu18existence}, Yang and Zhu gave a sufficient condition such that \eqref{eq:gmf} has a solution when $\rho=8\pi$ and $\alpha<\lambda_1\left(\Sigma\right)$. 

Our aim is to study the existence problem for \eqref{eq:gmf} when $\rho\not=8k\pi$ with $k\in \mathbb{N}$ and $\alpha<\lambda_1\left(\Sigma\right)$. To achieve this goal, we  begin by  studying the blow-up phenomena for the generalized mean field equation \eqref{eq:gmf}. For the classical case (i.e., $\alpha=0$), it is well known that the blow-up phenomena only occurs if the parameter $\rho$ is a multiple of $8\pi$. Therefore the set of solutions is compact when $\rho\in\mathbb{R}\setminus 8\pi \mathbb{N}^{*}$. Our first main theorem in the following can be viewed as an analogous conclusion.

\begin{theorem}\label{thm:compactness}Let $I, J, K$ be compact subsets with $ I\subset\mathbb{R}\setminus 8\pi\mathbb{N}^*, J\subset\left(\mathbb{R}\setminus\mathrm{Spec}(-\Delta)\right)\cup\set{0}$ and
\begin{align*}
    K\subset\set{h\in C^{1,\tau}\left(\Sigma\right): \text{h is a positive function}}
\end{align*}
where $0<\tau<1$. Then there exists a constant $C$ such that
\begin{align*}
    \norm{u}_{C^{1,\tau}\left(\Sigma\right)}\leq C
\end{align*}
for all solutions $u$ to \eqref{eq:gmf} with
\begin{align*}
    \rho\in I,\quad \alpha\in J,\quad h\in K.
\end{align*}
\end{theorem}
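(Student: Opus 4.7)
The plan is to argue by contradiction via a blow-up analysis, in the spirit of Brezis--Merle, Li--Shafrir, Y.Y.\ Li and Chen--Lin for the classical case $\alpha=0$, treating the additional term $\alpha(u-\fint_{\Sigma}u)$ as a compact perturbation that becomes genuinely lower order after rescaling, provided the spectral assumption on $J$ is in force. Suppose for contradiction that there exist $\rho_n\in I$, $\alpha_n\in J$, $h_n\in K$ and solutions $u_n$ of \eqref{eq:gmf} with $\|u_n\|_{C^{1,\tau}(\Sigma)}\to\infty$. By compactness of $I,J,K$ one may pass to subsequences with $\rho_n\to\rho_*\in I$, $\alpha_n\to\alpha_*\in J$ and $h_n\to h_*$ in $C^{1,\tau}$, $h_*>0$. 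Exploit the invariance of \eqref{eq:gmf} under $u\mapsto u+c$ to normalize $\int_\Sigma h_n e^{u_n}=1$, and set $v_n=u_n-\fint_\Sigma u_n$, $c_n=\fint_\Sigma u_n$. Then $\int_\Sigma v_n=0$ and
\begin{equation*}
(-\Delta-\alpha_n)v_n=\rho_n(h_n e^{u_n}-1).
\end{equation*}

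If $\sup_n\|u_n\|_{L^\infty}<\infty$, the right-hand side is bounded in $L^\infty$; because $\alpha_*$ is either $0$ or outside $\mathrm{Spec}(-\Delta)$, the operator $-\Delta-\alpha_n$ is uniformly invertible on mean-zero functions, so elliptic regularity yields $\{v_n\}$ bounded in $C^{1,\tau}$. Combined with the boundedness of $c_n$ (forced by $\int_\Sigma h_n e^{u_n}=1$ and $v_n$ bounded), this contradicts the blow-up assumption. Hence $M_n:=\max_\Sigma u_n\to\infty$, and we are in the genuine blow-up regime. Here $L^1$-theory applied to $(-\Delta-\alpha_n)v_n$ (whose right-hand side has bounded $L^1$-norm) gives $\{v_n\}$ bounded in $W^{1,q}(\Sigma)$ for every $q<2$, hence in $L^p(\Sigma)$ for every $p<\infty$; this is the crucial place where the hypothesis $\alpha_*\notin\mathrm{Spec}(-\Delta)\setminus\{0\}$ is used.

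Next, rewrite the equation as $-\Delta u_n=\rho_n h_n e^{u_n}-\rho_n+\alpha_n v_n$ with $\alpha_n v_n$ as an $L^p$-perturbation, and apply the Brezis--Merle alternative to locate the concentration set $S=\{p\in\Sigma:\exists\,x_n\to p,\ u_n(x_n)\to\infty\}$. Standard arguments then give that $S$ is finite, $u_n-c_n\to-\infty$ locally uniformly on $\Sigma\setminus S$ (or the limit is harmonic), and $\rho_n h_n e^{u_n}\rightharpoonup\sum_{p\in S}m_p\,\delta_p$ as measures with $m_p>0$. At each $p\in S$, choose $x_n\to p$ with $u_n(x_n)=\max_{B_r(p)}u_n\to\infty$, set $\varepsilon_n=e^{-u_n(x_n)/2}$ and $w_n(y)=u_n(x_n+\varepsilon_n y)-u_n(x_n)$. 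The rescaled equation reads
\begin{equation*}
-\Delta w_n=\rho_n h_n(x_n+\varepsilon_n y)e^{w_n}+\varepsilon_n^2\bigl(\alpha_n v_n(x_n+\varepsilon_n y)-\rho_n\bigr),
\end{equation*}
and the perturbation term vanishes locally since $\varepsilon_n^2\to 0$ and $v_n\in L^p$. Passing to the limit yields a solution $w_\infty$ of $-\Delta w_\infty=\rho_* h_*(p)e^{w_\infty}$ on $\mathbb{R}^2$ with $\int_{\mathbb{R}^2}e^{w_\infty}<\infty$. The Chen--Li classification together with the standard bubble-tree/quantization argument (Li--Shafrir, Y.Y.\ Li) yields $m_p\in 8\pi\mathbb{N}^*$ at every $p\in S$. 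Summing and using $\int_\Sigma\rho_n h_n e^{u_n}=\rho_n$ gives $\rho_n\to\sum_{p\in S}m_p\in 8\pi\mathbb{N}^*$, contradicting $\rho_*\in I\subset\mathbb{R}\setminus 8\pi\mathbb{N}^*$. Finally, $C^{1,\tau}$-continuity of the solution map in $h$ upgrades the $L^\infty$-bound to a $C^{1,\tau}$-bound.

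\textbf{Main obstacle.} The delicate point throughout is to verify rigorously that the term $\alpha_n(u_n-\fint_\Sigma u_n)$ does not disturb either the Brezis--Merle alternative (used away from $S$) or the mass-quantization at each $p\in S$. Quantitatively this means establishing a Green-function representation of $v_n$ via the invertibility of $-\Delta-\alpha_n$ on mean-zero functions, then showing that after the Liouville rescaling the contribution of $\alpha_n v_n$ is $o(1)$ locally; without the hypothesis $\alpha_*\notin\mathrm{Spec}(-\Delta)\setminus\{0\}$, resonance with an eigenfunction could produce unbounded $v_n$ even in the absence of concentration, so this assumption is essential rather than cosmetic.
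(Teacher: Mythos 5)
Your proposal is broadly sound but takes a different and substantially heavier route than the paper. The key observation that the paper exploits, and which your outline misses, is that once one has the $W^{1,p}(\Sigma)$ bound on $v_n=u_n-\fint_\Sigma u_n$ for $p<2$ (the step both of you obtain from the spectral hypothesis via a compactness/contradiction argument), one can \emph{eliminate} the $\alpha$-term entirely rather than carry it along as a perturbation. Concretely, the paper defines $f_n$ by $-\Delta f_n=\alpha_n\bigl(u_n-\fint_\Sigma u_n\bigr)$ with $\int_\Sigma f_n=0$; elliptic regularity then gives $f_n$ bounded in $W^{3,p}(\Sigma)$, hence precompact in $C^{1,\tau}(\Sigma)$, and setting $\tilde u_n=u_n-f_n$, $\tilde h_n=h_ne^{f_n}$ converts \eqref{eq:gmf} into the \emph{classical} equation $-\Delta\tilde u_n=\rho_n\bigl(\tilde h_ne^{\tilde u_n}-1\bigr)$ with $\tilde h_n\to he^{f}$ in $C^{1,\tau}$. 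At that point the entire Brezis--Merle / Li--Shafrir / Chen--Li machinery for $\alpha=0$ applies verbatim, including the Pohozaev-based mass quantization $\mu(\{x_0\})=8\pi$, and no re-examination of the classical proofs is needed. Your approach instead keeps $\alpha_nv_n$ as an $L^p$-perturbation through the Brezis--Merle alternative, the rescaling, and the quantization argument; this can be made to work but requires re-deriving each of those statements with the extra inhomogeneity, and you explicitly defer exactly that verification as the ``main obstacle'' without carrying it out. In particular the Pohozaev identity computation determining the blow-up mass, which in the paper is done for a pure $V_ne^{u_n}$ right-hand side plus a bounded term, would need to be redone with $\alpha_nv_n$ present, where $v_n$ is only controlled in $W^{1,p}$ ($p<2$) near the blow-up point. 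The paper's substitution sidesteps this entirely, and I would recommend adopting it: it turns what you identify as the delicate perturbative step into a one-line reduction.
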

\begin{rem}
We give an example to show that the blow-up happens for $0<\rho<8\pi$ and $\alpha=\lambda_1\left(\Sigma\right)$. Let
$J_{\rho,\alpha}$ be defined as in \eqref{eq:functional}. Set $\alpha_n=\lambda_1\left(\Sigma\right)-1/n$ and let $u_n$ be a minimizer of $J_{\rho,\alpha_n}$ by the Trudinger-Moser inequality \eqref{eq:TM-1}. Then a straightforward calculation shows
\begin{align*}
    \lim_{n\to\infty}J_{\rho,\alpha_n}(u_n)=&\inf_{u\in H^1\left(\Sigma\right)}J_{\rho,\lambda_1\left(\Sigma\right)}(u)\leq\lim_{t\to+\infty}J_{\rho,\lambda_1\left(\Sigma\right)}\left(t\xi\right)=-\lim_{t\to+\infty}\ln\int_{\Sigma}he^{t\xi}=-\infty,
\end{align*}
where $\xi$ is a nonzero function which solves
\begin{align*}
    -\Delta\xi=\lambda_1\left(\Sigma\right)\xi.
\end{align*}
This implies that $\set{u_n}$ must be a blow-up sequence.
\end{rem}

According to \autoref{thm:compactness}, one can define the Leray-Schauder degree $d_{\rho,\alpha,h}$ for \eqref{eq:gmf} as follows (for more details about Leray-Schauder degree and its various properties we refer the reader to Chang \cite[Chapter 3]{Cha05methods}). Let
\begin{align*}
    X_{\tau}=\set{u\in C^{2,\tau}\left(\Sigma\right): \int_{\Sigma}u=0}.
\end{align*}
Clearly $X_{\tau}$, equipped with the $C^{2,\tau}\left(\Sigma\right)$ norm, is a Banach space. We introduce an operator $K_{\rho,\alpha, h}: X_{\tau}\To X_{\tau}$ by
\begin{align*}
    K_{\rho,\alpha, h}(u)=\left(-\Delta\right)^{-1}\left(\rho\left(\dfrac{he^{u}}{\int_{\Sigma}he^{u}}-1\right)+\alpha\left(u-\fint_{\Sigma}u\right)\right).
\end{align*}
The standard elliptic theory implies that $K_{\rho,\alpha, h}$ is a well defined compact operator. The generalized mean field equation \eqref{eq:gmf} is equivalent to $\left(1-K_{\rho,\alpha, h}\right)u=0$ in $X_{\tau}$. For any bounded open set $B\subset X_{\tau}$, the Leray-Schauder degree $\deg\left(1-K_{\rho,\alpha, h}, B, 0\right)$ is well defined provided $0\notin \left(1-K_{\rho,\alpha, h}\right)\left(\partial B\right)$. Let
\begin{align*}
    B_{R}^{X_{\tau}}=\set{u\in X_{\tau}:\norm{u}_{X_{\tau}}<R}
\end{align*}
be the ball in $X_{\tau}$.
Due to the compactness result in \autoref{thm:compactness}, we know that for all solutions $u$ to \eqref{eq:gmf}
\begin{align*}
    \norm{u-\ln\int_{\Sigma}he^{u}}_{X_{\tau}}\leq C,
\end{align*}
which implies that
\begin{align*}
   -C+\ln\int_{\Sigma}he^{u}\leq u\leq C+\ln\int_{\Sigma}he^{u}.
\end{align*}
If $\fint_{\Sigma}u=0$, then we obtain
\begin{align*}
    \abs{\ln\int_{\Sigma}he^{u}}\leq C.
\end{align*}
Consequently, the Leray-Schauder degree $\deg\left(1-K_{\rho,\alpha, h}, B_{R}^{X_{\tau}}, 0\right)$ is well defined for $R$ large and, in view of the homotopy invariance of the Leray-Schauder degree, is independent of $R$ as $R$ large. Thus \begin{align*}
    d_{\rho,\alpha}\coloneqq\lim_{R\to\infty}\deg\left(1-K_{\rho,\alpha, h}, B_{R}^{X_{\tau}}, 0\right)
\end{align*}
is well defined and is independent of $h$ due to the homotopy invariance.  In particular, according to Chen and Lin's result \cite[Theorem 2]{CheLin03topological},  we have the following
\begin{theorem}\label{thm:degree}
Let $(\Sigma,g)$ be a closed Riemann surface, $h$ a  positive smooth function,  $\lambda_1\left(\Sigma\right)$ the first positive eigenvalue of the negative Laplacian, $\rho$ and $\alpha$ real numbers. Then we have for $\rho\in\left(8k\pi, 8(k+1)\pi\right), k\in\mathbb{N}$ and $\alpha<\lambda_1\left(\Sigma\right)$,
\begin{align*}
    d_{\rho,\alpha}=d_{\rho,0}=\begin{cases}
        \dfrac{\left(k-\chi\left(\Sigma\right)\right)\dotsm \left(1-\chi\left(\Sigma\right)\right)}{k!},&k\in\mathbb{N}^{*},\\
        1,&k=0.
    \end{cases}
\end{align*}
\end{theorem}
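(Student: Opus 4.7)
The plan is to apply the homotopy invariance of the Leray--Schauder degree to deform the spectral parameter $\alpha$ continuously to $0$, thereby reducing the computation to the classical mean field equation handled by Chen and Lin.

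Fix $\rho \in (8k\pi, 8(k+1)\pi)$, $\alpha < \lambda_1(\Sigma)$, and a positive smooth $h$. For $t \in [0,1]$ I would consider the straight--line homotopy of compact operators $K_{\rho, t\alpha, h}$ on $X_\tau$. The key spectral observation is that on a closed Riemann surface $-\Delta$ has discrete nonnegative spectrum contained in $\set{0} \cup [\lambda_1(\Sigma), \infty)$, so the compact segment
$$J = \set{t\alpha : t \in [0,1]} = [\min(0,\alpha), \max(0,\alpha)]$$
meets $\mathrm{Spec}(-\Delta)$ only at $0$. Hence $J \subset (\mathbb{R} \setminus \mathrm{Spec}(-\Delta)) \cup \set{0}$, and \autoref{thm:compactness} applies with $I = \set{\rho}$, this $J$, and $K = \set{h}$.

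The theorem then produces a uniform bound $\norm{u}_{C^{1,\tau}(\Sigma)} \leq C$ for every solution $u$ of \eqref{eq:gmf} with parameter $(\rho, t\alpha, h)$, uniformly in $t \in [0,1]$. Using the translation argument recalled just before the statement of \autoref{thm:degree} — namely that any solution may be shifted by a constant so that $\fint_\Sigma u = 0$, after which $\ln\int_\Sigma he^u$ is automatically controlled — this yields a uniform bound in the $X_\tau$ norm. Choosing $R$ large enough so that $0 \notin (1 - K_{\rho, t\alpha, h})(\partial B_{R}^{X_\tau})$ for every $t \in [0,1]$, homotopy invariance gives
$$\deg\left(1 - K_{\rho, \alpha, h}, B_{R}^{X_\tau}, 0\right) = \deg\left(1 - K_{\rho, 0, h}, B_{R}^{X_\tau}, 0\right),$$
and passing $R \to \infty$ yields $d_{\rho, \alpha} = d_{\rho, 0}$.

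The remaining step is simply to quote the value of $d_{\rho, 0}$, the Leray--Schauder degree for the classical mean field equation \eqref{eq:mf} at $\rho \in (8k\pi, 8(k+1)\pi)$, from \cite[Theorem~2]{CheLin03topological}; this gives precisely the displayed formula. I do not expect a serious obstacle: once \autoref{thm:compactness} is in hand, the whole argument is a one--parameter homotopy in $\alpha$, and the only delicate point is the spectral check that the straight--line segment from $\alpha$ to $0$ avoids the nonzero eigenvalues of $-\Delta$, which is exactly where the hypothesis $\alpha < \lambda_1(\Sigma)$ is used.
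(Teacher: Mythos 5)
Your proposal is correct and takes essentially the same route the paper intends: the paper defines $d_{\rho,\alpha}$ via the uniform bounds from \autoref{thm:compactness}, invokes homotopy invariance, and then cites Chen--Lin for $d_{\rho,0}$, leaving the straight-line homotopy in $\alpha$ (and the spectral check that $[\min(0,\alpha),\max(0,\alpha)]$ meets $\mathrm{Spec}(-\Delta)$ only at $0$) implicit. You have simply spelled out that implicit step correctly.
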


An obvious consequence of \autoref{thm:degree} yields an existence result. Namely,
\begin{cor}\label{Cor3}
If $\chi\left(\Sigma\right)\leq0$ or $\chi(\Sigma)=2$ but $0\leq k\leq 1$, then there exists at least $\abs{d_{\rho,0}}$ solutions to \eqref{eq:gmf} provided that $\rho\in\mathbb{R}\setminus 8\pi\mathbb{N}^*$ and $\alpha<\lambda_1\left(\Sigma\right)$.
\end{cor}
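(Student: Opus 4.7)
The plan is to derive this corollary as an immediate consequence of \autoref{thm:degree}: since the Leray-Schauder degree $d_{\rho,\alpha}$ is well defined (via \autoref{thm:compactness}) and coincides with $d_{\rho,0}$ on the given parameter region, it suffices to show that $d_{\rho,0}\ne 0$ under the stated hypotheses and then invoke the standard solvability property of the degree.

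First I would do the case analysis for nonvanishing of $d_{\rho,0}$. If $\chi(\Sigma)\le 0$, then for every $k\in\mathbb{N}^{*}$ each factor $(j-\chi(\Sigma))$ with $1\le j\le k$ in the formula of \autoref{thm:degree} is a strictly positive integer, so
\[
    d_{\rho,0}=\binom{k-\chi(\Sigma)}{k}>0,
\]
while $d_{\rho,0}=1$ when $k=0$. If $\chi(\Sigma)=2$ (the sphere case), then $k=0$ gives $d_{\rho,0}=1$ and $k=1$ gives $d_{\rho,0}=1-\chi(\Sigma)=-1$. In every listed case we obtain $\abs{d_{\rho,0}}\ge 1$. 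Next, by \autoref{thm:compactness} there is some $R>0$ such that all solutions of \eqref{eq:gmf} with the prescribed $\rho\in\mathbb{R}\setminus 8\pi\mathbb{N}^{*}$ and $\alpha<\lambda_1(\Sigma)$ lie in the interior of $B_{R}^{X_\tau}$; in particular $0\notin (1-K_{\rho,\alpha,h})(\partial B_{R}^{X_\tau})$, so the degree $\deg(1-K_{\rho,\alpha,h},B_{R}^{X_\tau},0)=d_{\rho,\alpha}=d_{\rho,0}\ne 0$. The fundamental property of Leray-Schauder degree (see Chang \cite{Cha05methods}) then produces at least one zero of $1-K_{\rho,\alpha,h}$ in $X_\tau$, hence a solution of \eqref{eq:gmf}.

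To upgrade the existence of one solution to the claimed lower bound of $\abs{d_{\rho,0}}$ solutions, I would argue via a generic perturbation. Choose a small $C^{1,\tau}$ perturbation $h_\varepsilon$ of $h$, staying in the class of positive smooth potentials, such that every solution of the perturbed equation is non-degenerate; the linearization at each such solution is then an invertible compact perturbation of the identity, whose local Leray-Schauder index is $\pm 1$. Since the total degree equals the signed sum of these local indices and is independent of $h$, the perturbed equation has at least $\abs{d_{\rho,0}}$ distinct solutions. \autoref{thm:compactness} gives a uniform $C^{1,\tau}$ bound independent of $\varepsilon$, so by elliptic regularity and a diagonal subsequence one recovers at least $\abs{d_{\rho,0}}$ solutions of the original equation. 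The main obstacle, and the only step that is not automatic, is ruling out that distinct perturbed solutions coalesce in the limit; this is handled by observing that the sum of local indices over any cluster of colliding solutions must equal the total local degree contributed by the limit, so if two perturbed solutions with indices of the same sign merged then the limit would carry local degree $\ge 2$, which can be excluded by a small further perturbation combined with the invariance of degree.
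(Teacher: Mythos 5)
Your first part is right and is clearly the ``obvious consequence'' the paper has in mind: the case analysis showing $d_{\rho,0}\ne 0$ is correct (note for $\chi(\Sigma)\le 0$ each factor $j-\chi(\Sigma)\ge j\ge 1$, giving the positive binomial coefficient, and for the sphere with $k=1$ one gets $-1$), and then \autoref{thm:compactness} plus the Kronecker existence property of the degree gives at least one solution. The paper gives no further detail, so on that part your argument fills in what was left implicit.

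The second part is where there is a genuine gap. The strategy --- perturb $h$ to a generic $h_\varepsilon$, obtain at least $\abs{d_{\rho,0}}$ non-degenerate solutions each of index $\pm 1$, then pass to the limit using the uniform $C^{1,\tau}$ bound --- is the standard one, but your proposed resolution of the coalescence problem does not work. If several perturbed solutions collapse onto a single degenerate solution $u_0$ of the unperturbed equation, then $u_0$ simply \emph{has} local Leray--Schauder index of modulus $\ge 2$; there is no ``small further perturbation'' that can exclude this, since the local index of $u_0$ for the fixed datum $h$ is a fixed integer, and perturbing $h$ changes the equation, not that index. In fact for a fixed, non-generic $h$ the conclusion ``at least $\abs{d_{\rho,0}}$ distinct solutions'' does not follow from the value of the degree alone: the finite-dimensional model $z\mapsto z^2$ on $\mathbb{C}$ has degree $2$ at $0$ but only one zero, and nearby perturbations $z^2-\varepsilon$ have two zeros that coalesce as $\varepsilon\to 0$. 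So your argument proves the multiplicity statement for generic $h$, but the passage from generic $h$ back to an arbitrary $h$ as in the statement is exactly the missing step. (To be fair, the paper itself labels the corollary as obvious and gives no proof, so it is also being loose on this point; but the ``further perturbation'' sentence in your write-up should be removed or replaced, e.g.\ by restricting to generic $h$, by counting solutions with multiplicity, or by an additional argument showing non-degeneracy.)
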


We also consider the remaining cases of  \autoref{Cor3} and give the  existence result for \eqref{eq:gmf} on arbitrary closed Riemann surface. For this purpose, we  employ a minimax scheme. This is a standard method now,  which was used by Ding, Jost, Li and Wang \cite{DinJosLiWan99existence} to
study the mean field equation when $\rho\in(8\pi,16\pi)$, and by  Djadli \cite{Dja08existence} to solve the mean field equation when $\rho\in\left(8k\pi, 8(k+1)\pi\right)$ with $k\in\mathbb{N}^\ast$. Furthermore, Djadli and Malchiodi \cite{DjaMal08existence} employed it to
discuss the constant $Q$-curvature equation; Battaglia, Jevnikar and Malchiodi  \cite{BatJevMal15general} adopted it to study the Toda system.

Now we state our second main result as follows.
\begin{theorem}\label{Thm1}
Let $(\Sigma,g)$ be a closed Riemann surface, $h$ a  positive smooth function,  $\rho$ and $\alpha$ real numbers. Assume that $\rho\in (8k\pi, 8(k+1)\pi)$ for some non-negative integer number $k\in \mathbb{N}$ and $\lambda_1(\Sigma)$ is the first positive eigenvalue of the negative Laplacian.  Then there exists a solution to \eqref{eq:gmf}
provided $\alpha<\lambda_1\left(\Sigma\right)$.
\end{theorem}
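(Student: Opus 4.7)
For $\rho < 8\pi$ (the case $k=0$) the improved Trudinger--Moser inequality \eqref{eq:TM-1}, together with $\alpha < \lambda_1(\Sigma)$, makes $J_{\rho,\alpha}$ coercive and a direct minimizer solves \eqref{eq:gmf}. For $k \geq 1$, $J_{\rho,\alpha}$ is unbounded from below and I would follow the barycenter-based minimax scheme of Djadli \cite{Dja08existence} and Djadli--Malchiodi \cite{DjaMal08existence}. Because $\alpha < \lambda_1(\Sigma)$, the quadratic form $\mathcal{Q}_\alpha(u)\coloneqq \int_\Sigma(\abs{\nabla u}^2 - \alpha(u-\fint_\Sigma u)^2)$ is equivalent to $\int_\Sigma\abs{\nabla u}^2$ on the mean-zero subspace, so virtually every estimate from the $\alpha = 0$ theory transfers after replacing the Dirichlet energy by $\mathcal{Q}_\alpha$.

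\textbf{Localized inequality and retraction onto barycenters.} Let $\Sigma_k = \set{\sum_{i=1}^k t_i\delta_{x_i} : t_i \geq 0,\ \sum t_i = 1,\ x_i\in\Sigma}$ denote the space of formal barycenters of order $k$ with the weak-$\ast$ topology. The analytic core is a localized version of \eqref{eq:TM-1}: there exist $\varepsilon_0, C > 0$ such that whenever the probability measure $he^u/\int_\Sigma he^u$ places mass at least $\varepsilon_0$ on each of $k+1$ mutually $\varepsilon_0$-separated open sets of $\Sigma$, one has
\begin{align*}
    \ln\int_\Sigma h e^u \leq \frac{1}{16(k+1)\pi - \delta}\,\mathcal{Q}_\alpha(u) + \fint_\Sigma u + C
\end{align*}
for some $\delta = \delta(\varepsilon_0) > 0$. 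Since $\rho < 8(k+1)\pi$, I can pick $\delta$ small enough to make $J_{\rho,\alpha}$ bounded below on the set of such $u$. Contrapositively, for $L$ large any $u$ with $J_{\rho,\alpha}(u) \leq -L$ must carry a normalized measure close (in the Kantorovich--Rubinstein distance) to $\Sigma_k$, and a standard projection construction provides a continuous retraction $\Psi\colon\set{J_{\rho,\alpha}\leq -L}\To\Sigma_k$.

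\textbf{Test maps and minimax level.} For $\sigma = \sum t_i\delta_{x_i}\in\Sigma_k$ and large $\lambda$, I introduce the standard bubble ansatz
\begin{align*}
    \Phi_\lambda(\sigma)(x) = \ln\sum_{i=1}^k t_i\,\frac{\lambda^2}{(1+\lambda^2 d(x,x_i)^2)^2}.
\end{align*}
A direct asymptotic expansion gives $\sup_{\sigma\in\Sigma_k} J_{\rho,\alpha}(\Phi_\lambda(\sigma))\To -\infty$ as $\lambda\to\infty$, the key point being that $\rho > 8k\pi$ forces the leading logarithmic coefficient to be negative; the $\alpha$-term contributes only a lower-order perturbation controlled uniformly by $\lambda_1(\Sigma)-\alpha > 0$. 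The composition $\Psi\circ\Phi_\lambda$ is homotopic to the identity on $\Sigma_k$ for $\lambda,L$ large. Since $\Sigma_k$ is non-contractible but embeds as the boundary of its contractible topological cone $\widehat{\Sigma}_k$, the level
\begin{align*}
    c = \inf_{H\in\Gamma}\sup_{\sigma\in\widehat{\Sigma}_k}J_{\rho,\alpha}(H(\sigma)),\qquad \Gamma = \set{H\in C^0(\widehat{\Sigma}_k, H^1(\Sigma)) : H|_{\partial\widehat{\Sigma}_k} = \Phi_\lambda},
\end{align*}
satisfies $c > -L$ by the retraction $\Psi$, so $c$ is a genuine minimax level strictly above the boundary values.

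\textbf{Palais--Smale recovery and conclusion.} Since the Palais--Smale condition at level $c$ is not directly available, I would invoke Struwe's monotonicity trick applied to the family $t\mapsto J_{t\rho,\alpha}$ for $t$ in a neighborhood of $1$: monotonicity of the associated minimax value yields differentiability at a.e.\ $t$, and at each such regular $t$ a bounded Palais--Smale sequence for $J_{t\rho,\alpha}$ produces, via elliptic regularity, a solution $u_t$ of \eqref{eq:gmf} at parameter $t\rho$. Choosing regular values $t_n\to 1$ with $t_n\rho\in(8k\pi, 8(k+1)\pi)$ and applying \autoref{thm:compactness} --- whose hypotheses are met because $\alpha<\lambda_1(\Sigma)$ places $\alpha\in(\mathbb{R}\setminus\mathrm{Spec}(-\Delta))\cup\set{0}$ --- gives a uniform $C^{1,\tau}$ bound on $\set{u_{t_n}}$, so a diagonal subsequence converges to a solution of \eqref{eq:gmf} at $\rho$. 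I expect the principal obstacle to be the localized Trudinger--Moser inequality above with the perturbed form $\mathcal{Q}_\alpha$ in place of $\int\abs{\nabla u}^2$: while the cut-off concentration-compactness argument is by now standard, one must verify that the spectral gap $\lambda_1(\Sigma)-\alpha > 0$ allows the lower-order $L^2$ piece to be absorbed without spoiling the sharp constant $16(k+1)\pi$.
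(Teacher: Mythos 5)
Your proposal follows essentially the same route as the paper: a localized Trudinger--Moser inequality with the $\alpha$-perturbed quadratic form (the paper's Lemma on $X_{k,\delta_0,\gamma_0}$), a retraction $\Psi$ from low sublevels onto the barycenter space $\Sigma_k$, bubble test maps $\Phi_\lambda$, a minimax over the cone $\widehat\Sigma_k$ using non-contractibility, Struwe's monotonicity trick for bounded Palais--Smale sequences, and finally \autoref{thm:compactness} to pass $t_n\rho\to\rho$ to the limit. Your observation that $\alpha<\lambda_1(\Sigma)$ places $\alpha$ in $(\mathbb{R}\setminus\mathrm{Spec}(-\Delta))\cup\{0\}$, so the compactness theorem applies, and that the spectral gap $\lambda_1(\Sigma)-\alpha>0$ is exactly what lets the $L^2$ term be absorbed without damaging the sharp constant, are both the points the paper verifies; this is the correct argument.
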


\begin{rem}For $\rho\in (8k\pi, 8(k+1)\pi)$ and $\alpha<\lambda_1\left(\Sigma\right)$, note that $d_{\rho,\alpha}=0$ when $\Sigma$ is a $2$-sphere and $k\geq2$. 
Consequently, there exist at least two solutions in this case.
\end{rem}

Additionally, an analog of \autoref{Thm1} reads
\begin{rem}
Let $0=\lambda_0\left(\Sigma\right)<\lambda_1\left(\Sigma\right)<\dotsm$ be all distinct eigenvalues of $-\Delta$, $E_{\lambda_{k}\left(\Sigma\right)}$ the eigenfunction space with respect to $\lambda_{k}\left(\Sigma\right)$, and $E_l=E_{\lambda_0\left(\Sigma\right)}\oplus\dotsm\oplus E_{\lambda_{l}\left(\Sigma\right)}$. A similar argument above gives a critical point $u$ of $J_{\rho,\alpha}$ in $E_{l}^{\bot}$ provided $\rho\in\mathbb{R}\setminus 8\pi\mathbb{N}^*$ and $\alpha<\lambda_{l}\left(\Sigma\right)$, i.e.,
\begin{align*}
    -\Delta\left[u-\sum_{j=0}^L\hin{u}{\phi_j}\phi_j\right]=\rho\left[\dfrac{he^{u}}{\int_{\Sigma}he^{u}}-\sum_{j=0}^L\hin{\dfrac{he^{u}}{\int_{\Sigma}he^{u}}}{\phi_j}\phi_j\right]+\alpha\left[u-\sum_{j=0}^L\hin{u}{\phi_j}\phi_j\right],
\end{align*}
where $\set{\phi_0,\dotsc,\phi_L}$ is an orthonormal frame of $E_{l}$. This complements the results of Yang and Zhu \cite{YanZhu18existence}.
\end{rem}

The remaining part of this paper is organized as follows: firstly we study the compactness of the generalized mean field equation and prove \autoref{thm:compactness} in \autoref{sec:compactness}; secondly we give a new proof of a Trudinger-Moser inequality (cf. \autoref{thm:TM})  in \autoref{sec:TM}; finally we obtain the existence result for \eqref{eq:gmf} by using a minimax scheme and complete the proof of  \autoref{Thm1} in \autoref{sec:existence}. Hereafter we do not distinguish
sequence and subsequence; moreover, we often denote various constants by the same $C$.

\section{Compactness}\label{sec:compactness}

In this section, we first review some facts about the blow-up analysis for the mean field equation with positive potential, and then we prove the compactness result for  generalized mean field equation.

To study the general existence of the mean field equation \eqref{eq:mf} when $\rho\geq8\pi$, we consider the blow-up analysis of a sequence $u_n$ which solves
\begin{align}\label{eq:mf-sequence}
    -\Delta u_n=\rho_n\left(\dfrac{h_ne^{u_n}}{\int_{\Sigma}h_ne^{u_n}}-1\right),
\end{align}
where
\begin{align*}
    \rho_n\to\rho,\quad h_n\overset{C^{1}\left(\Sigma\right)}{\to}h.
\end{align*}
Up to adding a constant, we may assume
\begin{align*}
    \int_{\Sigma}h_ne^{u_n}=1.
\end{align*}
Since $\rho>0$ and $h$ is a positive smooth function, it yields that
\begin{align}\label{eq:energy}
    \int_{\Sigma}e^{u_n}\leq C.
\end{align}
Applying the Green  representation formula (cf. \cite[Theorem 4.13]{Aubin98some}) and the  potential estimate (cf. \cite[Lemma 7.12]{GilTru01elliptic}), we obtain
\begin{align*}
    \norm{u_n-\fint_{\Sigma}u_n}_{W^{1,p}\left(\Sigma\right)}\leq C_p\norm{\Delta u_n}_{L^1\left(\Sigma\right)}\leq C_p,\quad\forall p\in(1,2).
\end{align*}
If $\set{u_n^{+}}$ is bounded in $L^{\infty}\left(\Sigma\right)$, the standard elliptic estimate gives a uniform bound for $\set{u_n}$ in $L^{\infty}\left(\Sigma\right)$. In this case, $\set{u_n}$ is  compact in $C^2\left(\Sigma\right)$. If $\set{u_n}$ is a blow-up sequence, i.e.,
\begin{align*}
    \limsup_{n\to\infty}\max_{\Sigma}u_n=+\infty,
\end{align*}
we may assume $\rho_nh_ne^{u_n}\dif\mu_{\Sigma}$ converges to a nonzero Radon measure $\mu$. The singular set $S$ of the blow-up sequence $\set{u_n}$ is defined by
\begin{align*}
    S=\set{x\in\Sigma: \mu\left(\set{x}\right)\geq 4\pi}.
\end{align*}
It is clear that $S$ is a finite subset.
According to Brezis-Merle's estimate (\cite[Theorem 1]{BreMer91uniform}), for each smooth domain $\Omega\subset\Sigma$ and each solution $u$ to
\begin{align*}
\begin{cases}
    -\Delta u=f,&\text{in}\ \Omega,\\
    u=0,&\text{in}\ \partial\Omega,
\end{cases}
\end{align*}
where $f\in L^1\left(\Sigma\right)$, we have for every $\delta\in(0,4\pi)$
\begin{align*}
    \int_{\Omega}\exp\left(\dfrac{\left(4\pi-\delta\right)\abs{u}}{\norm{f}_{L^1\left(\Sigma\right)}}\right)\leq C_{\delta,\,\Omega}.
\end{align*}
Consequently, for every compact subset $K\subset\Sigma\setminus S$, there is a constant $C=C_K$ such that (cf. \cite[Lemma 2.8]{DinJosLiWan97differential})
\begin{align*}
    \norm{u_n-\fint_{\Sigma}u_n}_{L^{\infty}\left(K\right)}\leq C_K,
\end{align*}
which implies that $S$ is nonempty. One can check (cf. \citep[Page 1242-1243]{BreMer91uniform}) that $\lim_{n\to\infty}\fint_{\Sigma}u_n=-\infty$. Thus $\mu=\sum_{x\in\Sigma}\mu\left(\set{x}\right)\delta_{x}$. Moreover, the singular set can be characteristic of the blow-up set (\cite[Page 1240-1241]{BreMer91uniform}), i.e.,
\begin{align*}
    S=\set{x\in\Sigma: \exists \set{x_n}\subset\Sigma,\  \lim_{n\to\infty}x_n=x,\ \lim_{n\to\infty}u_n(x_n)=+\infty}.
\end{align*}
For $x_0\in S$, assume $S\cap B_{\delta}^{\Sigma}(x_0)=\set{x_0}$. Choose $B_{\delta}^{\Sigma}(x_0)\ni x_n\to x_0 $ such that
\begin{align*}
    \lambda_n\coloneqq u_n(x_n)=\max_{\overline{B_{\delta}^{\Sigma}(x_0)}}u_n\to+\infty.
\end{align*}
Without loss of generality, assume $B_{\delta}^{\Sigma}(x_0)$ is a Euclidean ball. Consider a rescaling
\begin{align*}
    \tilde u_n(x)=u_n\left(x_n+e^{-\lambda_n/2}x\right)-\lambda_n,\quad\abs{x}\leq e^{\lambda_n/2}\left(\delta-\abs{x_n}\right).
\end{align*}
From \eqref{eq:mf-sequence} and \eqref{eq:energy}, it follows  that
\begin{align*}
    -\Delta_{\mathbb{R}^2}\tilde u_n(x)=\rho_nh_n\left(x_n+e^{-\lambda_n/2}x\right) e^{\tilde u_n(x)}-\rho_n e^{-\lambda_n},
\end{align*}
and that
\begin{align*}
    \int_{\mathbb{B}_{R}}e^{\tilde u_n}\leq C_R.
\end{align*}
The above argument implies that $\tilde u_n$ converges strongly to $u_\infty$ in $H^2_{loc}\left(\mathbb{R}^2\right)$ as $n\to+\infty$, and then Chen-Li's classification result (\cite[Theorem 1]{CheLi91classification}) leads to
\begin{align*}
    u_\infty(x)=-2\ln\left(1+\dfrac{\rho h(x_0)}{8}\abs{x}^2\right).
\end{align*}
By Fatou's Lemma, we conclude that
\begin{align*}
    \mu\left(\set{x_0}\right)\geq 8\pi.
\end{align*}

Recall the following Pohozaev identity (cf. \cite[formula (8.1)]{KazWar74curvature}):
\begin{quotation}
    Assume $u$ is a solution to
    \begin{align*}
        -\Delta u=fe^{u}-P,\quad \text{in}\ \mathbb{B},
    \end{align*}
    where $f\in C^1\left(\mathbb{B}\right)$ and $P\in L^{\infty}\left(\mathbb{B}\right)$. Then for each $r\in(0,1)$ and every smooth function $F$,
    \begin{equation}\label{eq:pohozaev0}
    \begin{split}
        &\int_{\mathbf{B}_r}\hin{\nabla^2F-\dfrac12\Delta Fg}{\nabla u\otimes\nabla u}+\dfrac12\int_{\partial \mathbf{B}_r}\abs{\nabla u}^2\hin{\nabla F}{\nu}-\int_{\partial \mathbf{B}_r}\hin{\nabla F}{\nabla u}\hin{\nu}{\nabla u}\\
        =&\int_{\partial \mathbf{B}_r}fe^{u}\hin{\nabla F}{\nu}-\int_{\mathbf{B}_r}e^u\hin{\nabla f}{\nabla F}-\int_{\mathbf{B}_r}fe^{u}\Delta F-\int_{\mathbf{B}_r}P\hin{\nabla F}{\nabla u}.
    \end{split}
    \end{equation}
\end{quotation}
Take $F=\frac12\abs{x}^2$ in \eqref{eq:pohozaev0} to obtain
\begin{align}\label{eq:pohozaev1}
    \dfrac{r}{2}\int_{\partial \mathbf{B}_r}\abs{\nabla u}^2-r\int_{\partial \mathbf{B}_r}\hin{\nu}{\nabla u}^2=r\int_{\partial \mathbf{B}_r}fe^{u}-\int_{\mathbf{B}_r}e^u\hin{\nabla f}{x}-2\int_{\mathbf{B}_r}fe^{u}-\int_{\mathbf{B}_r}P\hin{x}{\nabla u}.
\end{align}
Applying the Pohozaev identity \eqref{eq:pohozaev1} to the blow-up sequence $u_n$, we conclude that
\begin{align*}
    \mu\left(\set{x_0}\right)=\lim_{r\to0}\left(\dfrac{r}{2}\int_{\partial\mathbf{B}_r}\hin{\nabla G}{\nu}^2-\dfrac{r}{4}\int_{\mathbf{B}_r}\abs{\nabla G}^2\right)=\dfrac{\mu\left(\set{x_0}\right)^2}{8\pi},
\end{align*}
where $G$ is the Green function satisfying
\begin{align*}
    -\Delta G(\cdot,y)=\sum_{x\in S}\mu\left(\set{x}\right)\delta_{x}-\rho,\quad\int_{\Sigma}G(\cdot,y)=0.
\end{align*}
Therefore, if $S$ has $k$ points then $\rho=8k\pi$. In other words, if $\rho\in\mathbb{R}\setminus 8\pi\mathbb{N}^*$, then there is a uniform constant $C$ such that
\begin{align*}
    \norm{u_n}_{L^{\infty}\left(\Sigma\right)}\leq C.
\end{align*}

We are in position to give a compactness result for the  generalized mean field equation as below.

\begin{proof}[Proof of \autoref{thm:compactness}]
We consider a sequence $u_n\in C^{2}\left(\Sigma\right)$ satisfying
\begin{align*}
    -\Delta u_n=\rho_n\left(h_ne^{u_n}-1\right)+\alpha_n\left(u_n-\fint_{\Sigma}u_n\right),
\end{align*}
where
\begin{align*}
    \rho_n\to \rho,\quad \alpha_n\to\alpha,\quad h_n\overset{C^{1,\tau}\left(\Sigma\right)}{\to}h,\quad\text{as}\ n\to\infty,
\end{align*}
and $0<\tau<1$. It suffices to prove that
\begin{align*}
    \norm{u_n}_{L^{\infty}\left(\Sigma\right)}\leq C.
\end{align*}
Arguing as in \eqref{eq:energy}, one can check that
\begin{align*}
    \int_{\Sigma}e^{u_n}\leq C.
\end{align*}

We claim the following potential estimate:
\begin{quotation}
    $\set{u_n-\fint_{\Sigma}u_n}$ is bounded in $W^{1,p}\left(\Sigma\right)$ for every $p\in(1,2)$.
\end{quotation}
In fact, for any fixed $p\in(0,1)$, applying the classical potential estimate for Laplacian, one has
\begin{align*}
    \norm{u_n-\fint_{\Sigma}u_n}_{W^{1,p}\left(\Sigma\right)}\leq C\norm{\Delta u_n}_{L^1\left(\Sigma\right)}\leq& C\left[\norm{\Delta u_n+\alpha_n\left(u_n-\fint_{\Sigma}u_n\right)}_{L^1\left(\Sigma\right)}+\norm{u_n-\fint_{\Sigma}u_n}_{L^1\left(\Sigma\right)}\right].
\end{align*}
Assume for some $C_n\to+\infty$,
\begin{align*}
    \norm{u_n-\fint_{\Sigma}u_n}_{W^{1,p}\left(\Sigma\right)}\geq C_n\norm{\Delta u_n+\alpha_n\left(u_n-\fint_{\Sigma}u_n\right)}_{L^1\left(\Sigma\right)}.
\end{align*}
Set $w_n=\frac{u_n-\fint_{\Sigma}u_n}{\norm{u_n-\fint_{\Sigma}u_n}_{W^{1,p}\left(\Sigma\right)}}$. Thus
\begin{align*}
    1=\norm{w_n}_{W^{1,p}\left(\Sigma\right)}\geq C_n\norm{\Delta w_n+\alpha_n w_n}_{L^1\left(\Sigma\right)},\quad\int_{\Sigma}w_n=0.
\end{align*}
We may assume $w_n$ converges to $w$ weakly in $W^{1,p}\left(\Sigma\right)$ and strongly in $L^p\left(\Sigma\right)$. Then
\begin{align*}
    \Delta w+\alpha w=0,\quad \int_{\Sigma}w=0.
\end{align*}
The classical potential estimate yields that
\begin{align*}
    1=\norm{w_n}_{W^{1,p}\left(\Sigma\right)}\leq C\left(\norm{\Delta w_n+\alpha_n w_n}_{L^1\left(\Sigma\right)}+\norm{w_n}_{L^1\left(\Sigma\right)}\right)\leq C\left(\norm{\Delta w_n+\alpha_n w_n}_{L^1\left(\Sigma\right)}+\norm{w_n}_{L^p\left(\Sigma\right)}\right).
\end{align*}
Letting $n\to\infty$, one finds
\begin{align*}
    1\leq C\norm{w}_{L^p\left(\Sigma\right)}.
\end{align*}
In particular, $\alpha$ is a nonzero eigenvalue of Laplacian which is a contradiction. Consequently,
\begin{align}\label{eq:w}
    \norm{u_n-\fint_{\Sigma}u_n}_{W^{1,p}\left(\Sigma\right)}\leq C\norm{\Delta u_n+\alpha_n\left(u_n-\fint_{\Sigma}u_n\right)}_{L^1\left(\Sigma\right)}\leq C.
\end{align}

Let $f_n$ be the solution of 
\begin{align}\label{eq:ff}
    -\Delta f_n=\alpha_n\left(u_n-\fint_{\Sigma}u_n\right),\quad\int_{\Sigma}f_n=0.
\end{align}
Thanks to \eqref{eq:w}, a standard elliptic estimate for \eqref{eq:ff} implies that
\begin{align*}
    \norm{f_n}_{W^{3,p}\left(\Sigma\right)}\leq C,\quad\forall p\in(1,2).
\end{align*}
Without loss of generality, we may assume $f_n$ converges to $f$ in $C^{1,\tau}\left(\Sigma\right)$ ($0<\tau<1$).
Define $\tilde u_n=u_n-f_n$ and $\tilde h_n=h_ne^{f_n}$. Then there holds
\begin{align*}
    -\Delta\tilde u_n=\rho_n\left(\tilde h_ne^{\tilde u_n}-1\right),
\end{align*}
and
\begin{align*}
    \tilde h_n\overset{C^{1,\tau}\left(\Sigma\right)}{\to}\tilde h\coloneqq he^{f}.
\end{align*}
Therefore, we may assume $\alpha_n=0$. The  blow-up analysis for the mean field equation with positive potential function is applicable. In particular, we complete the proof.
\end{proof}

\section{An improved Trudinger-Moser inequality}\label{sec:TM}

 From now on, we assume $\alpha<\alpha_1$. For $\rho\in\left(8k\pi, 8(k+1)\pi\right)(k\in \mathbb{N}^*)$, we shall adopt minimax arguments to derive the existence of solutions to \eqref{eq:gmf} on arbitrary closed Riemann surface.

As an application of  \autoref{thm:compactness}, we first prove the following Trudinger-Moser inequality, which was obtained by Yang \cite{Yan15extremal}.
\begin{theorem}\label{thm:TM}For $\alpha<\lambda_1$, there holds
\begin{align*}
    I_{8\pi,\alpha}(u)\coloneqq\dfrac{1}{16\pi}\int_{\Sigma}\left[\abs{\nabla u}^2-\alpha\left(u-\fint_{\Sigma}u\right)^2\right]+\fint_{\Sigma}u-\ln\int_{\Sigma}e^{u}\geq-C,\quad\forall u\in H^1\left(\Sigma\right).
\end{align*}
\end{theorem}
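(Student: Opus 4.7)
The plan is to prove the inequality by subcritical approximation, leveraging \autoref{thm:compactness} to control the minimizers as the parameter approaches the critical value $8\pi$.

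First, for each $\rho \in (0, 8\pi)$, I aim to show that the functional $J_{\rho, \alpha}$ with $h \equiv 1$ admits a minimizer $u_\rho$ on the closed subspace $\set{u \in H^1(\Sigma): \fint_\Sigma u = 0}$, which in turn satisfies \eqref{eq:gmf}. When $\rho$ is small, specifically $\rho < 8\pi(1 - \alpha_+/\lambda_1(\Sigma))$ with $\alpha_+ = \max(\alpha, 0)$, the classical Trudinger-Moser inequality \eqref{eq:TM} together with the Poincar\'e bound $\int_\Sigma (u - \fint_\Sigma u)^2 \leq \lambda_1(\Sigma)^{-1}\int_\Sigma \abs{\nabla u}^2$ implies
\[
J_{\rho, \alpha}(u) \geq \left(\frac{1 - \alpha_+/\lambda_1(\Sigma)}{2\rho} - \frac{1}{16\pi}\right)\int_\Sigma \abs{\nabla u}^2 - c,
\]
so direct methods yield a minimizer. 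The extension to the full range $\rho \in (0, 8\pi)$ can then be carried out by a continuation argument: \autoref{thm:compactness} gives uniform $C^{1,\tau}$-estimates on $u_\rho$ for $\rho$ in any compact subset of $(0, 8\pi)$, so the continuation $\rho \mapsto u_\rho$ does not degenerate before reaching the critical value.

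Next, I would take $\rho_n \nearrow 8\pi$ and consider the corresponding $u_{\rho_n}$ with $\fint_\Sigma u_{\rho_n} = 0$. If the $C^{1,\tau}$-bound persists as $\rho_n \to 8\pi^-$, I extract a subsequence $u_{\rho_n} \to u^*$ in $C^{1,\tau}$; since $J_{\rho,\alpha}(v)$ is continuous in $(\rho, v)$ and $u_{\rho_n}$ minimizes $J_{\rho_n, \alpha}$, the limit $u^*$ minimizes $J_{8\pi, \alpha}$, so $I_{8\pi, \alpha}(u) \geq J_{8\pi, \alpha}(u^*) > -\infty$ for every $u$, completing the proof.

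The main obstacle is excluding the possibility that $\set{u_{\rho_n}}$ blows up as $\rho_n \to 8\pi^-$; this is the gap left open by \autoref{thm:compactness}, which only controls $\rho$ in compacta avoiding $8\pi$. I would handle it via the blow-up analysis of \autoref{sec:compactness}: since each $\rho_n < 8\pi$, the singular set must reduce to a single point $p \in \Sigma$ carrying mass exactly $8\pi$, with the Chen-Li classification and the Pohozaev identity \eqref{eq:pohozaev1} pinning down the local bubble profile $-2\ln(1 + \frac{\rho h(p)}{8}\abs{y}^2)$; away from $p$, $u_{\rho_n}$ is asymptotic to the Green function $G(\cdot, p)$. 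Expanding $J_{\rho_n, \alpha}(u_{\rho_n})$ via this ansatz, I would check that the divergent contributions of the Dirichlet energy and of $\ln \int_\Sigma e^{u_{\rho_n}}$ cancel to leading order, leaving a finite limit depending on the Robin function and on $\alpha$. The new feature relative to the classical $\alpha=0$ case is the interplay of the quadratic term $\alpha(u - \fint_\Sigma u)^2$ with the concentrating bubble; I expect to control it via the decomposition $u_{\rho_n} = f_{\rho_n} + \tilde u_{\rho_n}$ (with $-\Delta f_{\rho_n} = \alpha(u_{\rho_n} - \fint u_{\rho_n})$) employed in the proof of \autoref{thm:compactness}, which reduces the expansion to the classical mean field setting where the asymptotics are known.
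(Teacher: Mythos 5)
Your overall strategy --- subcritical approximation $\rho_n \nearrow 8\pi$ followed by the decomposition $u_n = f_n + \tilde u_n$ to reduce the blow-up expansion to the classical mean field case --- is the same as the paper's, and your sketch of the blow-up step (single bubble carrying mass $8\pi$, Chen--Li classification, Pohozaev quantization, reduction via the harmonic correction $f$) is in order. The gap is upstream: you need \emph{minimizers} of the subcritical functional $I_{\rho_n,\alpha}$ for $\rho_n$ arbitrarily close to $8\pi$, but the coercivity estimate you give, combining the classical Trudinger--Moser inequality \eqref{eq:TM} with Poincar\'e's inequality, only covers $\rho < 8\pi(1-\alpha/\lambda_1(\Sigma))$ when $\alpha \in (0,\lambda_1)$ --- which is the only non-trivial case, since for $\alpha\leq 0$ the theorem follows directly from \eqref{eq:TM}. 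This leaves the interval $[8\pi(1-\alpha/\lambda_1),8\pi)$ of subcritical parameters uncovered. Your proposed fix, a continuation argument resting on the a priori bounds of \autoref{thm:compactness}, produces a branch of \emph{solutions} to \eqref{eq:gmf} on that interval, but it neither shows these solutions are global minimizers nor excludes $\inf J_{\rho,\alpha}=-\infty$ there: a priori bounds on solutions of the Euler--Lagrange equation say nothing about whether the infimum of the functional is finite or attained. Without minimality of the $u_{\rho_n}$, the passage $\rho_n\to 8\pi^-$ does not bound $\inf I_{8\pi,\alpha}$ from below.

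The paper closes this gap by first proving the subcritical inequality \eqref{eq:TM-weak}: for every $\varepsilon\in(0,16\pi)$ there is $C_\varepsilon$ such that
\begin{align*}
\frac{1}{16\pi-\varepsilon}\int_\Sigma\left[\abs{\nabla u}^2-\alpha\left(u-\fint_\Sigma u\right)^2\right]+\fint_\Sigma u-\ln\int_\Sigma e^{u}\geq -C_\varepsilon,\qquad \forall\, u\in H^1\left(\Sigma\right).
\end{align*}
Taking $\varepsilon=16\pi-2\rho_n$ shows $I_{\rho_n,\alpha}$ is coercive for every $\rho_n<8\pi$, whence minimizers exist. Proving \eqref{eq:TM-weak} is a genuinely new step beyond the naive Trudinger--Moser-plus-Poincar\'e bound: one truncates $u$ at a level $a$ chosen so that $\abs{\set{u\geq a}}=\eta$, applies \eqref{eq:TM} to $(u-a)^+$, controls the remaining $L^2$ terms by Poincar\'e and Kato inequalities, and then takes $\eta$ small. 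You should incorporate this step, or some other argument yielding coercivity of $I_{\rho,\alpha}$ on the whole range $\rho<8\pi$, to make your approach complete.
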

\begin{proof}
It is sufficient to consider the case $\alpha\in\left(0,\lambda_1\right)$.
We claim that for every $0<\varepsilon<16\pi$, there is a positive constant $C_{\varepsilon}$ such that
\begin{align}\label{eq:TM-weak}
    \dfrac{1}{16\pi-\varepsilon}\int_{\Sigma}\left[\abs{\nabla u}^2-\alpha\left(u-\fint_{\Sigma}u\right)^2\right]+\fint_{\Sigma}u-\ln\int_{\Sigma}e^{u}\geq-C_{\varepsilon},\quad\forall u\in H^1\left(\Sigma\right).
\end{align}
In fact, according to the classical Truding-Moser inequality \eqref{eq:TM}, we see that
\begin{align}\label{equ:en}\begin{split}
    \ln\int_{\Sigma}e^{u}\leq&\int_{\Sigma}e^{u^+}\\
    \leq&\dfrac{1}{16\pi}\int_{\Sigma}\abs{\nabla u^+}^2+\fint_{\Sigma}u^++C\\
    \leq&\dfrac{1}{16\pi}\int_{\Sigma}\left(\abs{\nabla u^+}^2-\alpha\left(u^+-\fint_{\Sigma}u^+\right)^2\right)+C\int_{\Sigma}\abs{u^+}^2+C.
    \end{split}
\end{align}
Without loss of generality, we may assume $\fint_{\Sigma}u=0$. For each $\eta\in(0,1)$, choose $a>0$ such that
\begin{align*}
    \abs{\set{x\in\Sigma: u(x)\geq a}}=\eta.
\end{align*}
Then it follows from \eqref{equ:en} that
\begin{align}\label{eq:ua}
\begin{split}
    \ln\int_{\Sigma}e^{u}=&a+\ln\int_{\Sigma}e^{u-a}\\
    \leq&\dfrac{1}{16\pi}\int_{\Sigma}\left(\abs{\nabla \left(u-a\right)^+}^2-\alpha\left(\left(u-a\right)^+-\fint_{\Sigma}\left(u-a\right)^+\right)^2\right)+C\int_{\Sigma}\abs{\left(u-a\right)^+}^2+C+a\\
    =&\dfrac{1}{16\pi}\int_{\Sigma}\left(\abs{\nabla  u}^2-\alpha u^2\right)-\dfrac{1}{16\pi}\int_{\Sigma}\left(\abs{\nabla \left(u-a\right)^-}^2-\alpha\left(\left(u-a\right)^--\fint_{\Sigma}\left(u-a\right)^-\right)^2\right)\\
    &+\dfrac{\alpha}{8\pi}\fint_{\Sigma}\left(u-a\right)^{+}\fint_{\Sigma}\left(u-a\right)^{-}+C\int_{\Sigma}\abs{\left(u-a\right)^+}^2+C+a\\
    \leq&\dfrac{1}{16\pi}\int_{\Sigma}\left(\abs{\nabla  u}^2-\alpha u^2\right)+\varepsilon\fint_{\Sigma}\abs{\left(u-a\right)^{-}}^2+C_{\varepsilon}\int_{\Sigma}\abs{\left(u-a\right)^+}^2+C+a
    \end{split}
\end{align}
On the other hand, Poincar\'e's  inequality and Kato's inequality yield that
\begin{align}\label{eq:uaa1}
    \norm{\left(u-a\right)^{+}}_{L^2\left(\Sigma\right)}\leq C\eta \norm{\nabla u}_{L^2\left(\Sigma\right)}\leq C\eta^{1/2}\left(\int_{\Sigma}\left(\abs{\nabla  u}^2-\alpha u^2\right)\right)^{1/2},
\end{align}
and that
\begin{align}\label{eq:uaa2}
       \norm{\left(u-a\right)^{-}}_{L^2\left(\Sigma\right)}\leq C\left(\int_{\Sigma}\left(\abs{\nabla  u}^2-\alpha u^2\right)\right)^{1/2}.
\end{align}
Insert \eqref{eq:uaa1} and \eqref{eq:uaa2} into \eqref{eq:ua}, therefore
\begin{align*}
    \ln\int_{\Sigma}e^{u}\leq \left(\dfrac{1}{16\pi}+\varepsilon+C_{\varepsilon}\eta\right)\int_{\Sigma}\left(\abs{\nabla  u}^2-\alpha u^2\right)+C+a.
\end{align*}
Notice that
\begin{align*}
    a\eta=a\int_{\set{u\geq a}}\leq\int_{\set{u\geq a}}u\leq\eta^{1/2}\norm{u}_{L^2\left(\Sigma\right)}\leq C\eta^{1/2}\left(\int_{\Sigma}\left(\abs{\nabla  u}^2-\alpha u^2\right)\right)^{1/2}.
\end{align*}
As a consequence,
\begin{align}\label{eq:ett}
    \ln\int_{\Sigma}e^{u}\leq \left(\dfrac{1}{16\pi}+\varepsilon+C_{\varepsilon}\eta\right)\int_{\Sigma}\left(\abs{\nabla  u}^2-\alpha u^2\right)+C+\dfrac{C_{\varepsilon}}{\eta}.
\end{align}
Let $\eta$ small enough. Finally \eqref{eq:TM-weak} follows from \eqref{eq:ett} immediately.

In view of  \eqref{eq:TM-weak}, for each $\rho_n=8\pi-1/n$,  there exits a minimizer $u_n\in H^1\left(\Sigma\right)$ to the functional
\begin{align}\label{equ:fun}
    I_{\rho_n,\alpha}(u)\coloneqq\dfrac{1}{2\rho_n}\left(\int_{\Sigma}\abs{\nabla u}^2-\alpha\left(u-\fint_{\Sigma}u\right)^2\right)+\fint_{\Sigma}u-\ln\int_{\Sigma}e^{u},\quad u\in H^1\left(\Sigma\right).
\end{align}
It is obvious to see
\begin{align*}
    \lim_{n\to\infty}I_{\rho_n,\alpha}\left(u_n\right)=\inf_{u\in H^1\left(\Sigma\right)}I_{8\pi,\alpha}(u).
\end{align*}
One can check that if $u_n$ blows up,  then (\cite[Theorem 1.1]{YanZhu18existence})
\begin{align}\label{eq:lower-I}
    \inf_{u\in H^1\left(\Sigma\right)}I_{8\pi,\alpha}(u)=-1-\ln\pi-\frac{1}{2}\max_{\Sigma}A,
\end{align}
where $A$ is the regular part of the Green fucntion $G$, i.e.,
\begin{align}\label{eq:green}
    -\Delta G(\cdot,y)=8\pi\left(\delta_{y}-1\right)+\alpha G(\cdot,y),\quad\int_{\Sigma}G(\cdot,y)=0,
\end{align}
Here $G$ takes the form
\begin{align}\label{eq:green2}
    G(x,x_0)=-4\ln r(x)+A(x_0)+\psi(x),
\end{align}
where $r$ denotes the geodesic distance between $x$ and $x_0$, $\psi(x)\in C^1(\Sigma)$ and $\psi(x_0)=0$. We next give a new proof of  \eqref{eq:lower-I}. Without loss of generality, we may assume $u_n$ satisfies
\begin{align*}
    -\Delta u_n=\rho_n\left(e^{u_n}-1\right)+\alpha\left(u_n-\fint_{\Sigma}u_n\right)
\end{align*}
and $u_n-\fint_{\Sigma}u_n$ converges to the Green function $G$ weakly in $W^{1,p}\left(\Sigma\right)$ and strongly in $L^p\left(\Sigma\right)$. Let $\set{x_0}$ be the blow-up point. Let $f_n$ be given by
\begin{align*}
    -\Delta f_n=\alpha\left(u_n-\fint_{\Sigma}u_n\right),\quad\int_{\Sigma}f_n=0.
\end{align*}
Due to the potential estimate \eqref{eq:w}, we obtain that $f_n$ is compact in  $C^1\left(\Sigma\right)$. Set $w_n=u_n-f_n$ and $k_n=e^{f_n}$. We may assume $k_n=e^{f_n}$ converges to $k\coloneqq e^{f}$ in $C^1\left(\Sigma\right)$ as $n\to\infty$. And then, there holds
\begin{align*}
    -\Delta\left(G(\cdot,x_0)-f\right)=8\pi\left(\delta_{x_0}-1\right).
\end{align*}
Observe that
\begin{align*}
    -\Delta w_n=\rho_n\left(k_ne^{w_n}-1\right).
\end{align*}
It is easy to check that $\set{w_n}$ is a blow-up sequence. Since
\begin{align*}
    I_{\rho_n,\alpha}(u_n)=\left[\dfrac{1}{2\rho_n}\int_{\Sigma}\abs{\nabla w_n}^2+\fint_{\Sigma}w_n-\ln\int_{\Sigma}k_ne^{w_n}\right]+\left[\dfrac{\alpha}{2\rho_n}\int_{\Sigma}\left(u_n-\fint_{\Sigma}u_n\right)^2-\dfrac{1}{2\rho_n}\int_{\Sigma}\abs{\nabla f_n}^2\right],
\end{align*}
we conclude from the proof of \cite[formula (1.4)]{DinJosLiWan97differential} that
\begin{align}\label{eq:ii}
\begin{split}
    \lim_{n\to\infty}I_{\rho_n,\alpha}(u_n)=&\lim_{n\to\infty}\left[\dfrac{1}{2\rho_n}\int_{\Sigma}\abs{\nabla w_n}^2+\fint_{\Sigma}w_n-\ln\int_{\Sigma}k_ne^{w_n}\right]+\lim_{n\to\infty}\left[\dfrac{\alpha}{2\rho_n}\int_{\Sigma}\left(u_n-\fint_{\Sigma}u_n\right)^2-\dfrac{1}{2\rho_n}\int_{\Sigma}\abs{\nabla f_n}^2\right]\\
    \geq&-1-\ln\pi-\left(\ln k(x_0)+\dfrac12\tilde A(x_0)\right)+\dfrac{\alpha}{16\pi}\int_{\Sigma}G(\cdot,x_0)^2-\dfrac{1}{16\pi}\int_{\Sigma}\abs{\nabla f}^2.
    \end{split}
\end{align}
Here $\tilde A$ is the regular part of the Green function $\tilde G$, i.e.,
\begin{align*}
    -\Delta\tilde G(\cdot,y)=8\pi\left(\delta_y-1\right),\quad\int_{\Sigma}\tilde G(\cdot,y)=0.
\end{align*}
and in a local normal coordinates $x$ centering at $x_0$, 
\begin{align}\label{eq:lnx}
    \tilde G(x,x_0)=-4\ln\abs{x}+\tilde A(x_0)+\tilde\psi(x),
\end{align}
where $\tilde\psi$ is a smooth function with $\tilde\psi(x_0)=0$. 
Note that $G(\cdot,x_0)=\tilde G(\cdot,x_0)+f$. Together with \eqref{eq:green2} and \eqref{eq:lnx}, it leads to  $\tilde A(x_0)=A(x_0)-f(x_0)$ and
\begin{align}\label{eq:fff}
    \dfrac{1}{16\pi}\int_{\Sigma}\abs{\nabla f}^2=-\dfrac{1}{16\pi}\int_{\Sigma}f\Delta f=\dfrac{\alpha}{16\pi}\int_{\Sigma}fG=\dfrac{1}{16\pi}\int_{\Sigma}f\left(-\Delta G+8\pi\left(1-\delta_{x_0}\right)\right)=-\dfrac{1}{16\pi}\int_{\Sigma}\Delta f G-\dfrac{f(x_0)}{2}.
\end{align}
Combing \eqref{eq:ii} and  \eqref{eq:fff}, one has
\begin{align*}
    \lim_{n\to\infty}I_{\rho_n,\alpha}(u_n)\geq&-1-\ln\pi-\left(f(x_0)+\dfrac12\tilde A(x_0)\right)+\dfrac{f(x_0)}{2}=-1-\ln\pi-\dfrac12A(x_0)\geq-1-\ln\pi-\dfrac12\max_{\Sigma}A.
\end{align*}
On the other hand, we construct a sequence of functions $\set{\phi_n}$ (which is in \cite{YanZhu18existence}) satisfying
\begin{align*}
\lim_{n\to\infty}I_{8\pi,\alpha}\left(\phi_n-\fint_{\Sigma}\phi_n\right)
=-1-\ln\pi-\dfrac{1}{2}\max_{\Sigma}A.
\end{align*}
Precisely, suppose that $A(x_0)=\max_{x\in \Sigma}A_x$. Let $r=r(x)$ be the geodesic distance between $x$ and $x_0$. Set
\begin{align*}
\phi_n(x)=\begin{cases}
    c-2\log \left(1+\frac{r^2}{8r_n^2}\right),& x\in B_{Rr_n}^{\Sigma}(x_0),\\
G(x,x_0)-\eta(x)\psi(x),& x\in B_{2Rr_n}^{\Sigma}(x_0)\backslash B_{Rr_n}^{\Sigma}(x_0),\\
G(x,x_0), & x\in \Sigma\backslash B_{2Rr_n}^{\Sigma}({x_0}),
\end{cases}
\end{align*}
where $r_n=e^{-\lambda_n/2},  \lambda_n=\max_{\Sigma}u_n, \eta \in C^\infty_0\left(B^{\Sigma}_{2Rr_n}({x_0})\right)$ is a cut-off function satisfying $\eta\equiv1$ in $B^{\Sigma}_{Rr_n}({x_0})$ and $\abs{\nabla \eta(x)}\leq \frac{4}{Rr_n}$ for $x\in B^{\Sigma}_{2Rr_n}(x_0)$,  $G(\cdot,x_0)$ and  $\psi$ are defined in \eqref{eq:green}, \eqref{eq:green2} respectively. Moreover
\begin{align*}
c=2\ln\left(1+R^2/8\right)-4\ln R-4\ln r_n+A(x_0).
\end{align*}
Then we estimate the three terms of \eqref{equ:fun} respectively. By a straightforward calculation as in the proof of (\cite{YanZhu18existence}, Theorem 1.1.), we can get the desired result.
Thus, we end the proof of \eqref{eq:lower-I} and complete the proof the theorem.

\end{proof}

We next prove an improved Trudinger-Moser inequality which will be used in the next section. Namely,
\begin{lem}
Assume $\alpha<\lambda_1\left(\Sigma\right)$. For positive integer number $k$, positive numbers $\delta_0,\gamma_0$, consider
\begin{align*}
    X_{k,\delta_0,\gamma_0}\coloneqq \set{u\in H^1\left(\Sigma\right): \exists\ \text{subdomains}\  \Omega_i\subset\Sigma\ \text{s.t.}\ \min_{1\leq i<j\leq k}\mathrm{dist}\left(\Omega_i,\Omega_j\right)\geq\delta_0\ \text{and}\ \min_{1\leq i\leq k}\int_{\Omega_i}e^{u}\geq\gamma_0\int_{\Sigma}e^{u}}.
\end{align*}
For every $\varepsilon\in(0,16k\pi)$, there exists a positive constant $C=C_{k,\delta_0,\gamma_0,\varepsilon}$ such that
\begin{align}\label{eq:gtm}
    \ln\int_{\Sigma}e^{u}\leq\dfrac{1}{16k\pi-\varepsilon}\int_{\Sigma}\left(\abs{\nabla u}^2-\alpha\left(u-\fint_{\Sigma}u\right)^2\right)+\fint_{\Sigma}u+C,\quad\forall u\in X_{k,\delta_0,\gamma_0}.
\end{align}
\end{lem}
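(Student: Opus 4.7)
The plan is to run the concentration-of-mass localization scheme of Chen--Li type, adapted to the $\alpha$-corrected Trudinger--Moser inequality proved just above (\autoref{thm:TM}). I first reduce to the case $\fint_\Sigma u=0$ by translation invariance of both sides, and set $T(u):=\int_\Sigma(|\nabla u|^2-\alpha u^2)$; since $\alpha<\lambda_1(\Sigma)$, Poincar\'e's inequality on mean-zero functions gives $T(u)\ge(\lambda_1(\Sigma)-\alpha)\int_\Sigma u^2\ge0$, so $T$ is a positive-definite quadratic form and controls $\int_\Sigma u^2$. The target reduces to $\ln\int_\Sigma e^u\le T(u)/(16k\pi-\varepsilon)+C$. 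The guiding intuition is that each of the $k$ disjoint regions $\Omega_i$ contributes an independent copy of \autoref{thm:TM} with coefficient $1/(16\pi)$, and averaging these $k$ bounds recovers the $k$-fold improved coefficient.

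Next, using the separation hypothesis $\mathrm{dist}(\Omega_i,\Omega_j)\ge\delta_0$, I would choose smooth cutoffs $\eta_i\colon\Sigma\to[0,1]$ with $\eta_i\equiv 1$ on $\Omega_i$ and supports in the pairwise disjoint $\delta_0/3$-neighborhoods $\tilde\Omega_i$; a standard partition-of-unity construction yields $\|\nabla\eta_i\|_\infty\le C(\delta_0)$. Since $\eta_i\equiv 1$ on $\Omega_i$, the mass condition yields
\[
\int_\Sigma e^u\le\gamma_0^{-1}\int_{\Omega_i}e^u=\gamma_0^{-1}\int_{\Omega_i}e^{\eta_iu}\le\gamma_0^{-1}\int_\Sigma e^{\eta_iu}.
\]
I then apply \autoref{thm:TM} to each $\eta_iu$ and sum the resulting $k$ inequalities to obtain
\[
k\ln\int_\Sigma e^u\le\frac{1}{16\pi}\sum_{i=1}^kJ_i+\sum_{i=1}^k\fint_\Sigma\eta_iu+C',\quad J_i:=\int_\Sigma|\nabla(\eta_iu)|^2-\alpha\int_\Sigma(\eta_iu-\fint_\Sigma\eta_iu)^2.
\]

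The crux of the argument is the sharp localization estimate $\sum_{i=1}^kJ_i\le(1+\kappa)T(u)+C_\kappa$ for any preassigned $\kappa>0$. Expanding $|\nabla(\eta_iu)|^2=\eta_i^2|\nabla u|^2+2\eta_iu\nabla\eta_i\cdot\nabla u+u^2|\nabla\eta_i|^2$ and integrating by parts via the identity $\sum_i\eta_i\Delta\eta_i=\tfrac12\Delta(\sum_i\eta_i^2)-\sum_i|\nabla\eta_i|^2$, the disjoint-support property $\chi:=\sum_i\eta_i^2\le1$ together with $\sum_i\int(\eta_iu-\fint\eta_iu)^2=\int_\Sigma\chi u^2-\sum_i(\fint\eta_iu)^2$ yields $\sum_iJ_i\le\int_\Sigma\chi(|\nabla u|^2-\alpha u^2)+C_{\delta_0,\alpha}\int_\Sigma u^2$, and the first integral is bounded by $T(u)+|\alpha|\int_\Sigma u^2$. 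The Poincar\'e estimate $\int_\Sigma u^2\le(\lambda_1-\alpha)^{-1}T(u)$ then converts every $\int u^2$-error into a multiple of $T(u)$, while $|\sum_i\fint_\Sigma\eta_iu|\le C\|u\|_{L^2}\le CT(u)^{1/2}$ is absorbed by Young's inequality into $\delta T(u)+C_\delta$. Combining and dividing by $k$ gives $\ln\int_\Sigma e^u\le\tfrac{1+\kappa}{16k\pi}T(u)+C_\kappa$, and choosing $\kappa$ small enough that $\tfrac{1+\kappa}{16k\pi}\le\tfrac{1}{16k\pi-\varepsilon}$ concludes the argument.

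The main obstacle is securing the prefactor $(1+\kappa)$ with $\kappa$ arbitrarily small, rather than a fixed constant $K>1$ of size $1+|\alpha|/(\lambda_1-\alpha)$ as produced by the naive cutoff bound. This refinement requires either iterating the cutoff absorption with progressively thinner transition regions (relying on $\alpha<\lambda_1(\Sigma)$ to keep each iteration contractive), or supplementing with the blow-up/concentration-compactness analysis of \autoref{sec:compactness}: any violating sequence $u_n$ with $T(u_n)\to\infty$ would have to concentrate at more than $k$ bubbles of mass $\ge 8\pi$ each, contradicting the $\gamma_0$-spread hypothesis, which forces at most $k$ concentration points to carry positive mass.
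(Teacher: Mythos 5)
Your route is genuinely different from the paper's, but it has a gap that you yourself partially diagnose and do not close. The paper's proof is short: apply the known Chen--Li improved inequality (\cite[Theorem 2.1]{CheLi91prescribing}), which for every $\varepsilon>0$ and mean-zero $u\in X_{k,\delta_0,\gamma_0}$ already gives
\[
\ln\int_\Sigma e^u \le \frac{1}{16k\pi-\varepsilon}\int_\Sigma|\nabla u|^2 + C_\varepsilon,
\]
and then absorb the $-\alpha\int u^2$ correction by the same level-set truncation already used to prove \eqref{eq:TM-weak} in \autoref{thm:TM}: choose $a$ so that $\abs{\set{u\ge a}}=\eta$; then $(u-a)^+$ is supported on a set of measure $\eta$, Poincar\'e's constant there is $O(\eta)$, so the quadratic correction is $O(\eta)\int_\Sigma|\nabla u|^2$ and can be absorbed into the $\varepsilon$-loss. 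The $k$-region localization (Chen--Li) and the $\alpha$-correction (truncation) are handled in separate steps, so no cutoff-localization loss ever enters.

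Your cutoff-and-average scheme does not reach the sharp constant. After expanding $|\nabla(\eta_i u)|^2$ and summing, the derivative-of-cutoff and commutator terms contribute
\[
\sum_i J_i \le T(u) + \frac{C}{\delta_0^2}\int_\Sigma u^2 + \dotsb,
\]
and Poincar\'e only gives $\int_\Sigma u^2 \le (\lambda_1-\alpha)^{-1}T(u)$, a \emph{fixed} multiple of $T(u)$: the naive scheme therefore yields $\sum_i J_i \le (1+K)T(u)+C$ with $K$ a fixed constant of size roughly $C\delta_0^{-2}(\lambda_1-\alpha)^{-1}$, \emph{not} $(1+\kappa)$ with $\kappa$ arbitrarily small. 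Dividing by $k$ lands you on a coefficient $\frac{1+K}{16k\pi}$, which is strictly larger than $\frac{1}{16k\pi}$ and cannot be pushed below $\frac{1}{16k\pi-\varepsilon}$ once $\varepsilon$ is small; the lemma requires the bound for \emph{every} $\varepsilon\in(0,16k\pi)$. You flag this obstruction and float two repairs, but neither is carried out: shrinking the transition annuli increases $|\nabla\eta_i|^2$ in exactly the proportion the support shrinks, so $\int u^2|\nabla\eta_i|^2$ does not tend to zero and the iteration is not contractive; and the blow-up/concentration-compactness alternative is a substantial independent argument (essentially reproving a Chen--Li type result from scratch) that is only gestured at. As written the proof is incomplete. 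The clean repair is the paper's: do not localize by cutoffs at all; invoke Chen--Li for the $k$-region improvement at $\alpha=0$, then import the truncation trick from the proof of \eqref{eq:TM-weak}.
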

\begin{proof}
Given $u\in X_{k,\delta_0,\gamma_0}$ with $\fint_{\Sigma}u=0$, applying an improved Trudinger-Moser inequality for $\alpha=0$ (cf. \cite[Theorem 2.1]{CheLi91prescribing}), we obtain
\begin{align*}
    \ln\int_{\Sigma}e^{u}\leq\dfrac{1}{16k\pi-\varepsilon}\int_{\Sigma}\abs{\nabla u}^2+\fint_{\Sigma}u+C,\quad\forall u\in X_{k,\delta_0,\gamma_0}.
\end{align*}
Using a similar method as in  the proof of \eqref{eq:TM-weak}, we complete the proof.
\end{proof}

\section{Existence}\label{sec:existence}

The remaining part of this section is devoted to the proof of  \autoref{Thm1}. We begin with the topological structure for the formal set of barycenters, which is to be used in the minimax argument.
Denote $\mathcal{D}\left(\Sigma\right)$ by the distributions on $\Sigma$. We will use on $\mathcal{D}\left(\Sigma\right)$ the metric given by $C^1\left(\Sigma\right)^*$ and which will be denoted by $\mathbf{d}(\cdot,\cdot)$. In other words, for $\sigma,\zeta\in\mathcal{D}(\Sigma)$,
\begin{align*}
    \mathbf{d}(\sigma,\zeta)=\sup_{\psi\in C^1\left(\Sigma\right): \norm{\psi}_{C^1\left(\Sigma\right)}\leq 1}\hin{\sigma-\zeta}{\psi}.
\end{align*}
We consider
\begin{align*}
    \Sigma_k=\set{\sum_{i=1}^kt_i\delta_{x_i}: t_i\geq0, x_i\in\Sigma,  \sum_{i=1}^kt_i=1}\subset\mathcal{D}\left(\Sigma\right)
\end{align*}
which is known as the formal set of barycenters of $\Sigma$ of order $k$. It is the fact that $\Sigma_k$ is non-contractible for every $k\geq1$ (cf. \cite[Lemma 4.7]{Dja08existence}).

The first step is to construct a continuous projection from low sublevels of the functional to the $k$-th barycenters, precisely
\begin{lemma}\label{prop:Psi}
Let $\rho\in\left(8k\pi, 8(k+1)\pi\right), k\in\mathbb{N}$ and $\alpha<\lambda_1\left(\Sigma\right)$. Then for $L$ sufficiently large there exists a continuous projection
\begin{align*}
    \Psi: J_{\rho,\alpha}^{-L}\coloneqq\set{u\in H^1\left(\Sigma\right): J_{\rho,\alpha}(u)\leq -L}\To\Sigma_{k}.
\end{align*}
Moreover, if $\frac{e^{u_n}}{\int_{\Sigma}e^{u_n}}\dif\mu_{\Sigma}$ converges to $\sigma\in\Sigma_{k}$, then $\Psi(u_n)\to\sigma$.
\end{lemma}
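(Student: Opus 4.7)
The plan is to show that functions with very negative energy $J_{\rho,\alpha}$ produce normalized measures $f_u \coloneqq e^{u}\,\dif\mu_{\Sigma}/\int_{\Sigma}e^{u}$ that cluster in $\mathbf{d}$-distance around the set $\Sigma_{k}$, and then to retract a tubular neighborhood of $\Sigma_{k}$ in $\left(\mathcal{D}(\Sigma),\mathbf{d}\right)$ continuously onto $\Sigma_{k}$. Combining these two continuous maps $u\mapsto f_u\mapsto\Psi(u)$ gives the desired projection.

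\textbf{Step 1: Concentration via the improved Trudinger--Moser inequality.} I claim that for every $\varepsilon>0$ there exists $L_{\varepsilon}$ such that $J_{\rho,\alpha}(u)\leq -L_{\varepsilon}$ forces $\mathbf{d}(f_u,\Sigma_{k})<\varepsilon$. Suppose not: then one finds $u_n$ with $J_{\rho,\alpha}(u_n)\to -\infty$ and $\mathbf{d}(f_{u_n},\Sigma_{k})\geq\varepsilon$. A standard covering lemma (see e.g.\ Djadli \cite{Dja08existence} and Djadli--Malchiodi \cite{DjaMal08existence}) says that if a probability measure $f$ on $\Sigma$ satisfies $\mathbf{d}(f,\Sigma_{k})\geq\varepsilon$, then there exist $\delta_0=\delta_0(\varepsilon)>0$, $\gamma_0=\gamma_0(\varepsilon)>0$ and $k+1$ subdomains $\Omega_1,\dotsc,\Omega_{k+1}\subset\Sigma$, mutually at distance $\geq\delta_0$, with $f(\Omega_i)\geq\gamma_0$ for each $i$. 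Applied to $f_{u_n}$, this gives $u_n\in X_{k+1,\delta_0,\gamma_0}$. The previous lemma (inequality \eqref{eq:gtm} with $k$ replaced by $k+1$) then yields, for any $\varepsilon'\in(0,16(k+1)\pi-\rho)$,
\begin{align*}
J_{\rho,\alpha}(u_n)\geq\left(\dfrac{1}{2\rho}-\dfrac{1}{16(k+1)\pi-\varepsilon'}\right)\int_{\Sigma}\left(\abs{\nabla u_n}^2-\alpha\left(u_n-\fint_{\Sigma}u_n\right)^2\right)-C.
\end{align*}
Choosing $\varepsilon'$ so that the coefficient is strictly positive (which is possible because $\rho<8(k+1)\pi$) and recalling that $\alpha<\lambda_1(\Sigma)$ makes the quadratic form coercive on $\{\int_\Sigma u=0\}$, we obtain a lower bound $J_{\rho,\alpha}(u_n)\geq -C$, contradicting $J_{\rho,\alpha}(u_n)\to -\infty$.

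\textbf{Step 2: Continuous retraction onto $\Sigma_{k}$.} By Step 1, for $L$ large enough the map $u\mapsto f_u$ sends $J_{\rho,\alpha}^{-L}$ continuously (with respect to the $H^1$ topology on the domain and $\mathbf{d}$ on the target) into an arbitrarily small $\mathbf{d}$-neighborhood $\mathcal{U}_\delta$ of $\Sigma_{k}$ inside the space of probability measures. The construction of a continuous retraction $r\colon\mathcal{U}_\delta\to\Sigma_{k}$ is by now standard; I would follow Djadli \cite[Proposition 4.2]{Dja08existence}: using a finite partition of unity on $\Sigma_{k}$, one locally extracts $k$ centers of mass of $f$ together with weights $t_i\geq 0$ summing to $1$, then glues via the partition of unity to obtain a point of $\Sigma_{k}$ depending continuously on $f$, and reducing to the identity on $\Sigma_{k}$ itself. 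Setting $\Psi(u)\coloneqq r(f_u)$ produces the sought projection.

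\textbf{Step 3: The convergence statement.} If $f_{u_n}\to\sigma\in\Sigma_{k}$ in $\mathbf{d}$, then eventually $f_{u_n}\in\mathcal{U}_\delta$, and continuity of $r$ together with $r|_{\Sigma_{k}}=\mathrm{id}$ gives $\Psi(u_n)=r(f_{u_n})\to r(\sigma)=\sigma$.

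\textbf{Main obstacles.} The analytical heart is Step 1, and there the only nontrivial point is quoting the covering lemma that converts $\mathbf{d}$-distance from $\Sigma_k$ into the existence of $k+1$ separated subdomains of comparable mass; everything else is a plug-in of the improved Trudinger--Moser inequality \eqref{eq:gtm}. The topological Step 2 is the construction of $r$, which is somewhat technical but entirely modeled on \cite{Dja08existence,DjaMal08existence}. The presence of the $\alpha$-term is handled for free because, modulo controlling the mean $\fint_\Sigma u$ (which cancels in $f_u$), the quadratic form $\int_\Sigma(|\nabla u|^2-\alpha(u-\fint_\Sigma u)^2)$ is equivalent to $\int_\Sigma|\nabla u|^2$ on $\{\int_\Sigma u=0\}$ as soon as $\alpha<\lambda_1(\Sigma)$.
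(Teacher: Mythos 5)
Your proposal is correct and follows essentially the same strategy as the paper's: a contradiction argument using the improved Trudinger--Moser inequality \eqref{eq:gtm} with $k+1$ separated regions of comparable mass to force concentration near $\Sigma_k$, then composition with a standard continuous retraction from a small $\mathbf{d}$-neighborhood of $\Sigma_k$ (the paper cites \cite[Proposition 2.2]{BatJevMal15general} for this, you cite Djadli; same result). The only cosmetic difference in Step 1 is that the paper extracts a weak-$*$ limit $\sigma$ of the normalized measures and, assuming $\sigma\notin\Sigma_k$, finds $k+1$ points in $\mathrm{supp}(\sigma)$ around which to place disjoint balls, while you invoke the uniform covering lemma directly; these are interchangeable, and indeed the paper models its own argument on \cite[Proposition 4.1]{MarLopRui18compactness}. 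One small slip: the range $\varepsilon'\in(0,16(k+1)\pi-\rho)$ you state is not the one that makes $\frac{1}{2\rho}-\frac{1}{16(k+1)\pi-\varepsilon'}$ positive — that needs $\varepsilon'<16(k+1)\pi-2\rho$ — but you impose precisely that in the following sentence, so the argument is unaffected.
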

\begin{proof}The proof is similar to  \cite[Proposition 4.1]{MarLopRui18compactness}.
There exists a large $L$ such that for every $u_n\in J_{\rho,\alpha}^{-L}$, the following holds
\begin{quotation}
    If $J_{\rho,\alpha}(u_n)\to-\infty$, then up to a subsequence,
\begin{align*}
    \sigma_n\coloneqq\dfrac{e^{u_n}}{\int_{\Sigma}e^{u_n}}\dif\mu_{\Sigma}\to\sigma\in\Sigma_k.
\end{align*}
\end{quotation}
One can prove this fact by contradiction that there exists $k+1$ points $\set{x_1,\dotsc,x_{k+1}}\subset\mathrm{supp}(\sigma)$. Take $r>0$ such that $B^{\Sigma}_{2r}(x_i)\cap B^{\Sigma}_{2r}(x_j)=\emptyset$ for all $i\neq j$. There exists $\varepsilon>0$ such that $\sigma\left(B^{\Sigma}_{r}(x_i)\right)>2\varepsilon$ for all $i$. Thus we may assume
\begin{align*}
    \dfrac{\int_{B^{\Sigma}_{r}(x_i)}e^{u_n}}{\int_{\Sigma}e^{u_n}}\geq\varepsilon,\quad i=1,\dotsc, k+1.
\end{align*}
The improved Trudinger-Moser inequality \eqref{eq:gtm} implies that for any $\tilde\varepsilon\in(0,16(k+1)\pi)$, there exists a constant $C=C(\tilde\varepsilon,\varepsilon,r)$ such that
\begin{align}\label{equ:ru}
\begin{split}
    \ln\int_{\Sigma}he^{u_n}\leq\ln\int_{\Sigma}e^{u_n}+C\leq \dfrac{1}{16(k+1)\pi-\tilde\varepsilon}\int_{\Sigma}\left(\abs{\nabla u_n}^2-\alpha\left(u_n-\fint_{\Sigma}u_n\right)^2\right)+\fint_{\Sigma}u_n+C.
    \end{split}
\end{align}
Taking $\tilde\varepsilon$ small, we conclude from \eqref{equ:ru} that
\begin{align*}
    J_{\rho,\alpha}(u_n)\geq-C,
\end{align*}
which yields a contradiction.

Therefore, for any $\varepsilon_0>0$, there exists $L_0$ large enough such that if $L>L_0$, then
\begin{align*}
    \dfrac{e^{u}}{\int_{\Sigma}e^{u}}\dif\mu_{\Sigma}\in\set{\sigma\in\mathcal{D}(\Sigma): \mathbf{d}\left(\sigma,\Sigma_k\right)<\varepsilon_0},\quad \forall u\in J_{\rho,\alpha}^{-L}.
\end{align*}
Notice that for small $\varepsilon_0$, there is a continuous retraction (cf. \cite[Proposition 2.2]{BatJevMal15general})
\begin{align*}
    \psi_k: \set{\sigma\in\mathcal{D}(\Sigma): \mathbf{d}\left(\sigma,\Sigma_k\right)<\varepsilon_0}\To\Sigma_k.
\end{align*}
Now define $\Psi$ by
\begin{align*}
    \Psi: u\mapsto\psi_k\left(\dfrac{e^{u}}{\int_{\Sigma}e^{u}}\dif\mu_{\Sigma}\right)
\end{align*}
 to finish the proof.
\end{proof}
Next, we aim to construct a continuous map $\Phi_{\lambda}:\Sigma_k\To J_{\rho,\alpha}^{-L}$ such that $\Psi\circ\Phi_{\lambda}$ is homotopic to the identity map for $\lambda$ large.
For this purpose, we choose a smooth non-decreasing function $\eta:\mathbb{R}\To\mathbb{R}$ such that
\begin{align*}
    \eta(t)=\begin{cases}
        t,& t\leq 1,\\
        2,& t\geq 2,
    \end{cases}
\end{align*}
and denote $\eta_{\delta}(t)=\delta\eta\left(t/\delta\right)$ for small $\delta>0$. Given $\sigma=\sum_{i=1}^kt_i\delta_{x_i}\in\Sigma_{k}$ and $\lambda>0$, we define
\begin{align*}
    \phi_{\lambda,\sigma}(x)=\ln\left(\sum_{i=1}^kt_i\dfrac{8\lambda^2}{\left(1+\lambda^2\eta_{\delta}^2\left(\mathrm{dist}^{\Sigma}(x,x_i)\right)\right)^2}\right).
\end{align*}
Then the following lemma holds.

\begin{lem}\label{lem:Phi}Let $\rho\in\left(8k\pi, 8(k+1)\pi\right), k\in\mathbb{N}$ and $\alpha<\lambda_1\left(\Sigma\right)$. We can choose a small $\delta$ and large $\lambda$ such that for any $\sigma\in\Sigma_k$,
\begin{align*}
    J_{\rho,\alpha}\left(\phi_{\lambda,\sigma}\right)\leq-\left(1-\dfrac{8k\pi}{\rho}\right)\ln\lambda,
\end{align*}
and
\begin{align*}
    \dfrac{e^{\phi_{\lambda,\sigma}}}{\int_{\Sigma}e^{\phi_{\lambda,\sigma}}}\dif\mu_{\Sigma}\to\sigma,\quad\text{as}\ \lambda\to+\infty.
\end{align*}
\end{lem}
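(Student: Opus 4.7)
The plan is to carry out the test-function computations in the spirit of Ding--Jost--Li--Wang and Djadli, with the extra quadratic term $\alpha(u - \fint_\Sigma u)^2$ in $J_{\rho,\alpha}$ controlled by an $O(1)$ bound on $\int_\Sigma (\phi_{\lambda,\sigma} - \fint_\Sigma \phi_{\lambda,\sigma})^2$. Fix $\sigma = \sum_{i=1}^k t_i\delta_{x_i} \in \Sigma_k$ and write $f_i(x) = 8\lambda^2/(1+\lambda^2 \eta_\delta^2(\mathrm{dist}^\Sigma(x,x_i)))^2$, so that $e^{\phi_{\lambda,\sigma}} = \sum_i t_i f_i$. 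Each $f_i$ is a truncated standard bubble concentrating at $x_i$ as $\lambda \to \infty$; I would choose $\delta$ smaller than the injectivity radius of $\Sigma$ first and then take $\lambda$ large.

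For the gradient, write $\nabla \phi_{\lambda,\sigma} = \sum_i w_i \nabla \ln f_i$ with weights $w_i = t_i f_i / \sum_j t_j f_j \in [0,1]$ summing to $1$; Jensen's inequality gives $|\nabla\phi_{\lambda,\sigma}|^2 \leq \sum_i w_i |\nabla\ln f_i|^2 \leq \sum_i |\nabla \ln f_i|^2$, and a direct computation in normal coordinates via the substitution $s = \lambda r$ yields $\int_\Sigma |\nabla \ln f_i|^2 \leq 32\pi \ln\lambda + O(1)$ uniformly in $x_i$, hence $\int_\Sigma |\nabla \phi_{\lambda,\sigma}|^2 \leq 32\pi k \ln\lambda + O(1)$. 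The same substitution shows $\int_\Sigma \psi f_i\, d\mu_\Sigma \to 8\pi \psi(x_i)$ for every $\psi \in C(\Sigma)$; in particular $\int_\Sigma e^{\phi_{\lambda,\sigma}} \to 8\pi$, so $\ln\int_\Sigma h e^{\phi_{\lambda,\sigma}} = O(1)$, and dividing and passing to the limit also establishes the claimed weak convergence $\frac{e^{\phi_{\lambda,\sigma}}}{\int_\Sigma e^{\phi_{\lambda,\sigma}}}\, d\mu_\Sigma \to \sigma$.

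Next, split $\Sigma = \bigcup_i B^\Sigma_\delta(x_i) \cup \Sigma^\ast$, where $\Sigma^\ast$ denotes the complement. On $\Sigma^\ast$ one has $\phi_{\lambda,\sigma} = -2\ln\lambda + O(1)$ uniformly, while on each ball an explicit radial integration (again via $s = \lambda r$) shows that the $\delta^2 \ln\lambda$ contributions from the bulk and peak regions cancel, giving $\fint_\Sigma \phi_{\lambda,\sigma} = -2\ln\lambda + O(1)$. The same decomposition, combined with integrability of $(\ln r)^2 r$ near $r=0$, yields $\int_\Sigma (\phi_{\lambda,\sigma} - \fint_\Sigma \phi_{\lambda,\sigma})^2 = O(1)$. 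Assembling the four estimates,
\begin{align*}
    J_{\rho,\alpha}(\phi_{\lambda,\sigma}) \leq \frac{1}{2\rho}\left(32\pi k \ln\lambda + O(1)\right) + \left(-2\ln\lambda + O(1)\right) = -2\left(1 - \frac{8k\pi}{\rho}\right)\ln\lambda + O(1),
\end{align*}
and since $2(1 - 8k\pi/\rho) > 1 - 8k\pi/\rho > 0$ for $\rho \in (8k\pi, 8(k+1)\pi)$, the stated bound $J_{\rho,\alpha}(\phi_{\lambda,\sigma}) \leq -(1 - 8k\pi/\rho)\ln\lambda$ follows for all $\lambda$ sufficiently large.

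The main obstacle is uniformity over $\Sigma_k$: the constants hidden in the various $O(1)$ terms must not depend on the configuration $(t_i, x_i)$, even when some $t_i \to 0$ or two of the $x_i$ coalesce. The pointwise inequality $w_i \leq 1$ in the Jensen step and the fact that the remaining estimates are localized near each support point together handle the coalescence case, while $t_i \to 0$ simply produces vacuous contributions; in particular the gradient, mass, average, and $L^2$ estimates are all insensitive to the geometry of $\mathrm{supp}(\sigma)$, which is what makes the argument work uniformly on the compact space $\Sigma_k$.
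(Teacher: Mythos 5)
Your proposal is correct and follows essentially the same test-function computation as the paper: estimate the Dirichlet energy ($\approx 32k\pi\ln\lambda$), the mean ($\fint_\Sigma\phi_{\lambda,\sigma} = -2\ln\lambda + O(1)$), the exponential mass ($\int_\Sigma e^{\phi_{\lambda,\sigma}}\to 8\pi$, hence $\ln\int_\Sigma he^{\phi_{\lambda,\sigma}} = O(1)$), control the $\alpha$-term via the $L^2$-deviation, and assemble. Two points where your write-up genuinely differs from the paper's are worth noting. First, for the gradient you use the convexity bound $|\nabla\phi_{\lambda,\sigma}|^2 \le \sum_i w_i|\nabla\ln f_i|^2 \le \sum_i|\nabla\ln f_i|^2$ and then localize each summand near $x_i$; the paper instead splits $\Sigma$ into the balls $B^\Sigma_{2\delta}(x_i)$ and their complement, which tacitly requires the balls to be disjoint, so your Jensen step is more transparently uniform when two of the $x_i$ coalesce---a genuine small gain in robustness, and you were right to flag uniformity over $\Sigma_k$ as the main point requiring care. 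Second, you claim $\int_\Sigma(\phi_{\lambda,\sigma}-\fint\phi_{\lambda,\sigma})^2 = O(1)$ with $\delta$ fixed; the paper only proves the weaker bound $O(\delta^2)\ln\lambda$ via a Poincar\'e-type inequality. Your stronger claim is true (on the annulus $1/\lambda < r < \delta$ one has $\phi_{\lambda,\sigma}-\fint\phi_{\lambda,\sigma} = 4\ln(\delta/r)+O(1)$ and $\int_0^\delta(\ln(\delta/r))^2 r\,dr = O(\delta^2)$, while the region $r<1/\lambda$ contributes $O((\ln\lambda)^2/\lambda^2)$), but it requires this pointwise radial computation rather than the soft Poincar\'e argument. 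Either bound suffices because the factor of $2$ in $-2(1-8k\pi/\rho)\ln\lambda$ leaves room to absorb $O(\delta^2)\ln\lambda$ after fixing $\delta$ small.

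Two minor blemishes. Your statement ``$\int_\Sigma|\nabla\ln f_i|^2 \le 32\pi\ln\lambda + O(1)$'' is glib about the volume-form correction $\sqrt{\det g}=1+O(r^2)$; the correct observation is that $|\nabla\ln f_i|^2 \sim 16/r^2$ for $r\gtrsim 1/\lambda$, so the $O(r^2)$ error integrates to $O(\delta^2)$ rather than $O(\delta^2)\ln\lambda$. And your phrase ``the $\delta^2\ln\lambda$ contributions from the bulk and peak regions cancel'' mischaracterizes what happens: there is no cancellation, only that $\phi_{\lambda,\sigma}=-2\ln\lambda + O_\delta(1)$ outside a set of area $O(1/\lambda^2)$, so the average is $-2\ln\lambda+O_\delta(1)$. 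Neither blemish affects the conclusion, and your final assembly and the observation that $2(1-8k\pi/\rho)>1-8k\pi/\rho$ finishes the argument correctly.
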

\begin{proof}
Assume  $\delta\ll1$ and $\lambda\gg1$. Set $\sigma=\sum_{i=1}^kt_i\delta_{x_i}, t_i\geq0, \sum_{i=1}^kt_i=1$. By definition
\begin{align*}
    \phi_{\lambda,\sigma}(x)=\begin{cases}\ln\left(\frac{8\lambda^2}{\left(1+4\delta^2\lambda^2\right)^2}\right),
        &x\in\Sigma\setminus\left(\cup_{i=1}^kB^{\Sigma}_{2\delta}(x_i)\right),\\
        \ln\left(\frac{8\lambda^2t_i}{\left(1+\lambda^2\eta_{\delta}^2\left(\mathrm{dist}^{\Sigma}(x,x_i)\right)\right)^2}+\frac{8\lambda^2(1-t_i)}{\left(1+4\delta^2\lambda^2\right)^2}\right),&x\in B^{\Sigma}_{2\delta}(x_i).
    \end{cases}
\end{align*}
Choose local normal coordinates $x$ around $x_i$. It is well known that
\begin{align*}
    \det\left(g_{ij}(x)\right)=1+O\left(\abs{x-x_i}^2\right).
\end{align*}
For any fixed $\delta>0$, and every smooth function $\psi$ on $\Sigma$, it holds
\begin{align}\label{equ:si}
\begin{split}
   \int_{\Sigma}e^{\phi_{\lambda,\sigma}}\psi=&\sum_{i=1}^kt_i\int_{B^{\Sigma}_{r}(x_i)}\dfrac{8\lambda^2}{\left(1+\lambda^2\left(\mathrm{dist}^{\Sigma}(x,x_i)\right)^2\right)^2}+O\left(r^{-4}\lambda^{-2}\right)\\
   =&\left(1+O\left(r^2\right)\right)\sum_{i=1}^kt_i\int_{\mathbf{B}_{r}(x_i)}\dfrac{8\lambda^2}{\left(1+\lambda^2\abs{x-x_i}^2\right)^2}\psi(x)+O\left(r^{-4}\lambda^{-2}\right)\\
   =&\left(1+O\left(r^2\right)\right)\sum_{i=1}^kt_i\int_{\mathbf{B}_{r}(x_i)}\dfrac{8\lambda^2}{\left(1+\lambda^2\abs{x-x_i}^2\right)^2}\left(\psi(x)-\psi(x_i)\right)+8\pi\sum_{i=1}^kt_i\psi(x_i)+O\left(r^2\right)+O\left(r^{-4}\lambda^{-2}\right)\\
   =&8\pi\sum_{i=1}^kt_i\psi(x_i)+O\left(r\right)+O\left(r^{-4}\lambda^{-2}\right).
   \end{split}
\end{align}
Passing to the limit $\lambda\to+\infty$ first, and then  $r\to0$, we have by \eqref{equ:si} that
\begin{align*}
    \lim_{\lambda\to+\infty}\int_{\Sigma}e^{\phi_{\lambda,\sigma}}=8\pi
\end{align*}
and that
\begin{align*}
    \lim_{\lambda\to+\infty}\int_{\Sigma}\dfrac{e^{\phi_{\lambda,\sigma}}}{\int_{\Sigma}e^{\phi_{\lambda,\sigma}}}\psi=\sum_{i=1}^kt_i\psi(x_i)=\hin{\sigma}{\psi}.
\end{align*}
Thus we confirm the second claim.

For the first assertion, we compute
\begin{align}\label{equ:si1}
\begin{split}
    \int_{\Sigma}\abs{\nabla\phi_{\lambda,\sigma}}^2=&\sum_{i=1}^k\int_{B_{2\delta}^{\Sigma}(x_i)}\left(\dfrac{\frac{8\lambda^2t_i}{\left(1+\lambda^2\eta_{\delta}^2\right)^2}}{\frac{8\lambda^2t_i}{\left(1+\lambda^2\eta_{\delta}^2\right)^2}+\frac{8\lambda^2(1-t_i)}{\left(1+4\delta^2\lambda^2\right)^2}}\right)^2\cdot\left(\dfrac{4\lambda^2\eta_{\delta}\eta_{\delta}'}{1+\lambda^2\eta_{\delta}^2}\right)^2\\
    \leq&\sum_{i=1}^k\int_{B_{\delta}^{\Sigma}(x_i)}\left(\dfrac{4\lambda^2\mathrm{dist}^{\Sigma}\left(x,x_i\right)}{1+\lambda^2\left(\mathrm{dist}^{\Sigma}\left(x,x_i\right)\right)^2}\right)^2+C\\
    =&\sum_{i=1}^k\left(1+O\left(\delta^2\right)\right)\int_{\mathbf{B}_{\delta}}\left(\dfrac{4\lambda^2\abs{x}}{1+\lambda^2\abs{x}^2}\right)^2+C\\
    \leq&32k\pi\left(1+O\left(\delta^2\right)\right)\ln\lambda+C.
    \end{split}
\end{align}
This together with Poincar\'e's inequality leads to
\begin{align}\label{equ:si2}
\begin{split}
    \int_{\Sigma}\left(\phi_{\lambda,\sigma}-\fint_{\Sigma}\phi_{\lambda,\sigma}\right)^2\leq&\int_{\Sigma}\left(\phi_{\lambda,\sigma}-\ln\left(\dfrac{8\lambda^2}{\left(1+4\delta^2\lambda^2\right)^2}\right)\right)^2\\
    =&\sum_{i=1}^k\int_{B_{2\delta}^{\Sigma}(x_i)}\left(\phi_{\lambda,\sigma}-\ln\left(\frac{8\lambda^2}{\left(1+4\delta^2\lambda^2\right)^2}\right)\right)^2\\
    \leq&C\delta^2\int_{B_{2\delta}^{\Sigma}(x_i)}\abs{\nabla\phi_{\lambda,\sigma}}^2\\
    =&O\left(\delta^2\right)\ln\lambda.
    \end{split}
\end{align}
 By \eqref{equ:si1} and \eqref{equ:si2},  it follows from Jensen's inequality that
\begin{align}\label{equ:si3}
\begin{split}
    J_{\rho,\alpha}\left(\phi_{\lambda,\sigma}\right)\leq&\dfrac{1}{2\rho}\int_{\Sigma}\abs{\nabla \phi_{\lambda,\sigma}}^2+\fint_{\Sigma}\phi_{\lambda,\sigma}-\ln\int_{\Sigma}e^{\phi_{\lambda,\sigma}}+C\int_{\Sigma}\left(\phi_{\lambda,\sigma}-\fint_{\Sigma}\phi_{\lambda,\sigma}\right)^2+C\\
    \leq&\dfrac{1}{2\rho}\int_{\Sigma}\abs{\nabla \phi_{\lambda,\sigma}}^2+C\int_{\Sigma}\left(\phi_{\lambda,\sigma}-\fint_{\Sigma}\phi_{\lambda,\sigma}\right)^2+C\\
    \leq&\left(\dfrac{16k\pi}{\rho}-2+O\left(\delta^2\right)\right)\ln\lambda+C.
    \end{split}
\end{align}
Since $\rho>8k\pi$, choosing $\delta$ small and $\lambda$ large, by \eqref{equ:si3}, we obtain 
\begin{align*}
    J_{\rho,\alpha}\left(\phi_{\lambda,\sigma}\right)\leq-\left(1-\dfrac{8k\pi}{\rho}\right)\ln\lambda.
\end{align*}
This ends the proof of the lemma.
\end{proof}

Define $\Phi_{\lambda}(\sigma)=\phi_{\lambda,\sigma}$.
According to \autoref{prop:Psi} and \autoref{lem:Phi}, we conclude that for large $L$ and $\lambda\geq\lambda_L=e^{L/(1-8k\pi/\rho)}$
\begin{align*}
    \Sigma_k\overset{\Phi_{\lambda}}{\To}J_{\rho,\alpha}^{-\left(1-8k\pi/\rho\right)\ln\lambda}\overset{\Psi}{\To}\Sigma_k
\end{align*}
and $\lim_{\lambda\to+\infty}\Psi\circ\Phi_{\lambda}=\mathrm{Id}$. In particular, $\Psi\circ\Phi_{\lambda}$ is homotopic to the identity on $\Sigma_k$ provided $\lambda\geq\lambda_L$.

\hspace{2cm}\\
\indent Now we can prove the existence result by employing a minimax method.

\begin{proof}[Proof of \autoref{Thm1}]
Let $\hat\Sigma_k=\Sigma_k\times[0,1]/\Sigma\times\set{0}$ denote the cone over $\Sigma_k$. Choose $L$ large and consider the following class
\begin{align*}
    \Gamma_{\lambda}=\Gamma_{\lambda,\rho}=\set{\gamma\in C^0\left(\hat\Sigma_k, H^1\left(\Sigma\right)\right): \gamma(\cdot\times\set{1})=\Phi_{\lambda}},\quad\forall \lambda\geq e^{L/(1-8k\pi/\rho)}.
\end{align*}
It is clear that $\Gamma_{\lambda}$ is nonempty. Set
\begin{align*}
    \bar\Gamma_{\lambda}=\bar\Gamma_{\lambda,\rho}=\inf_{\gamma\in\Gamma_{\lambda_M}}\sup_{z\in\hat\Sigma_k}J_{\rho,\alpha}\left(\gamma(z)\right)\leq-\left(1-\dfrac{8k\pi}{\rho}\right)\ln\lambda.
\end{align*}
Using the fact that $\Sigma_k$ is non-contractible and $\Psi\circ\Phi_{\lambda}$ is homotopic to the identity, we have
\begin{align*}
    \bar\Gamma_{\lambda}>-2\left(1-\dfrac{8k\pi}{\rho}\right)\ln\lambda.
\end{align*}
For otherwise, there exists $\gamma\in\Gamma_{\lambda}$ with
\begin{align*}
    \sup_{z\in\hat\Sigma_k}J_{\rho,\alpha}\left(\gamma(z)\right)\leq-\dfrac{3}{2}\left(1-\dfrac{8k\pi}{\rho}\right)\ln\lambda.
\end{align*}
Write $z=(y,t)$ with $y\in\Sigma_k$, then the map
\begin{align*}
    t\mapsto\Psi\circ\gamma(\cdot,t)
\end{align*}
gives a homotopy in $\Sigma_k$ between $\Psi\circ\Phi_{\lambda}=\Psi\circ\gamma(\cdot,1)$ and a constant map $\Psi\circ\gamma(\cdot,0)$. But this is impossible since $\Sigma_k$ is non-contractible and $\Psi\circ\Phi_{\lambda}$ is homotopic to the identity.

By the monotonicity trick (cf. \cite[Lemma 5.1]{BatJevMal15general}), there exists $\Lambda\subset\left(1-\mu_0,1+\mu_0\right)$ such that $\Lambda$ is dense in $\left[1-\mu_0,1+\mu_0\right]$. Moreover, for any $\mu\in\Lambda$, the functional $J_{\mu\rho,\alpha}$ posses a bounded Palais-Smale sequence $\set{u_n}$ at level $\bar\Gamma_{\lambda,\mu\rho}$. Standard arguments show that there is a critical point $u_{n}$ of $J_{\mu_n\rho,\alpha}$ for each $\mu_n\in\Lambda$. Then applying the compactness result in \autoref{thm:compactness} and the denseness of $\Lambda$, we obtain a critical point of $J_{\rho,\alpha}$ and hence complete the proof.
\end{proof}


\end{document}